\DeclareMathOperator*{\argmin}{arg\,min}
\newcommand{\B}{{\mathbb{B}}}
\newcommand{\R}{{\mathbb{R}}}
\newcommand{\loss}{{\mathcal{L}}}
\newcommand{\regu}{{\mathcal{R}}}
\newcommand{\Q}{{\mathcal{Q}}}
\newcommand{\h}{{\mathcal{H}}}
\newcommand{\T}{{\mathcal{T}}}
\newtheorem{Assumption}{Assumption}
\newtheorem{Lemma}{Lemma}
\newtheorem{Remark}{Remark}
\newtheorem{Theorem}{Theorem}
\newtheorem{Example}{Example}
\begin{document}

\begin{frontmatter}



\title{Sparse recovery via nonconvex regularized $M$-estimators over $\ell_q$-balls}


\author[1,2]{Xin Li}
\author[3]{Dongya Wu}
\author[1]{Chong Li}
\author[4]{Jinhua Wang}
\author[5]{Jen-Chih Yao}

\address[1]{School of Mathematics, Northwest University, Xi’an, 710069, China}
\address[2]{School of Mathematical Sciences, Zhejiang University, Hangzhou 310027, P. R. China}
\address[3]{School of Information Science and Technology, Northwest University, Xi’an, 710069, China}
\address[4]{Department of Mathematics, Zhejiang University of Technology, Hangzhou 310032, P. R. China}
\address[5]{Center for General Education, China Medical University, Taichung 404, Taiwan}


\begin{abstract}

In this paper, we analyse the recovery properties of nonconvex regularized $M$-estimators, under the assumption that the true parameter is of soft sparsity. In the statistical aspect, we establish the recovery bound for any stationary point of the nonconvex regularized $M$-estimator, under restricted strong convexity and some regularity conditions on the loss function and the regularizer, respectively. In the algorithmic aspect, we slightly decompose the objective function and then solve the nonconvex optimization problem via the proximal gradient method, which is proved to achieve a linear convergence rate. In particular, we note that for commonly-used regularizers such as SCAD and MCP, a simpler decomposition is applicable thanks to our assumption on the regularizer, which helps to construct the estimator with better recovery performance. Finally, we demonstrate our theoretical consequences and the advantage of the assumption by several numerical experiments on the corrupted errors-in-variables linear regression model. Simulation results show remarkable consistency with our theory under high-dimensional scaling.

\end{abstract}

%

\begin{keyword}


Sparse recovery; Nonconvex regularized $M$-estimators; Recovery bound; Statistical consistency; Proximal gradient method; Convergence rate

\end{keyword}

\end{frontmatter}


\section{Introduction}\label{sec-intro}

High-dimensional data sets have posed both statistical and computational challenges in recent decades \citep{babu2004some, fan2014challenges, wainwright2014structured}. In the ``large $n$, small $m$'' regime, where $n$ refers to the problem dimension and $m$ refers to the sample size, it is well known that obtaining consistent estimators is impossible unless the model is endowed with some additional structures. Consequently, in the statistical aspect, a variety of research have imposed some low-dimensional constraint on the parameter space, such as sparse vectors \citep{bickel2009simultaneous}, low-rank matrices \citep{recht2010guaranteed}, or structured covariance matrices \citep{li2018efficient}. In the computational aspect, a lot of well-known estimators are formulated as solutions to optimization problems comprised of a loss function with a weighted regularizer, where the loss function measures the data fidelity and the regularizer represents the low-dimensional constraint. Estimators with this formulation are usually referred to as regularized $M$-estimators \citep{agarwal2012fast, negahban2012unified}. For instance, in high-dimensional liner regression, the Lasso \citep{tibshirani1996regression} is based upon solving a convex optimization problem, formed by a combination of the least squares loss and the $\ell_1$-norm regularizer. Significant progress has been achieved in studying the recovery bounds of convex $M$-estimators and designing both effective and efficient numerical algorithms for optimization; see \cite{agarwal2012fast, bickel2009simultaneous, negahban2012unified} and references therein.

Though the convex $M$-estimation problems have gained a great success, nonconvex regularized $M$-estimators have recently attracted increasing attention thanks to the better statistical properties they might enjoy. As an example, nonconvex regularizers such as the smoothly clipped absolute deviation penalty (SCAD) \citep{fan2001variable} and minimax concave penalty (MCP) \citep{zhang2010nearly} can eliminate the estimation bias to some extent and achieve more refined statistical rates of convergence, while the convex $\ell_1$-norm regularizer always induces significant estimation bias for parameters with large absolute values \citep{wang2014optimal, zhang2008sparsity}. Meanwhile, the loss function can also be nonconvex in real applications, such as error-in-variables linear regression; see \cite{carroll2006measurement, loh2012high} and references therein.

Standard statistical results for nonconvex $M$-estimators often only provide recovery bound for global solutions \citep{fan2001variable, zhang2010nearly, zhang2012general}, while several numerical methods previously proposed to optimize nonconvex functions, such as local quadratic approximation (LQA) \citep{fan2001variable}, minorization-maximization
(MM) \citep{hunter2005variable}, local linear approximation (LLA) \citep{zou2008one}, and coordinate descent \citep{breheny2011coordinate}, may attain the local solutions. This results in a noticeable gap between theory and practice. Therefore, it is necessary to analyse the statistical properties of the local solutions obtained by certain numerical procedures.

Recently, researchers in \cite{wang2014optimal} and \cite{loh2015regularized} have independently investigated the local solutions of nonconvex regularized $M$-estimators that can be formulated as
\begin{equation}\label{M-esti-intr}
\hat{\beta} \in \argmin_{\beta\in \Omega\subseteq \R^n}\{\loss_m(\beta)+\regu_\lambda(\beta)\},
\end{equation}
where $\loss_m(\cdot)$ is the loss function and $\regu_\lambda(\cdot)$ is the regularizer with regularization parameter $\lambda$. Both of $\loss_m$ and $\regu_\lambda$ can be nonconvex. \cite{wang2014optimal} proposed an approximate regularization path-following method leveraging the proximal gradient method \citep{nesterov2007gradient} within each path-following stage. The recovery bounds for all the approximate local solutions along the full regularization path were established. \cite{loh2015regularized} considered the $M$-estimator \eqref{M-esti-intr} with a convex side constraint $\Omega=\{\beta: g(\beta)\leq r\}$. They proved that any stationary points of the nonconvex optimization problem lie within statistical precision of the true parameter and modified the proximal gradient method to obtain a near-global optimum.

However, these works both focus on the sparsity assumption that the underlying parameter $\beta^*$ is exact sparse, i.e., $\|\beta^*\|_0=s$, which may be too strict for some problems. Let us consider the standard linear regression $y=\sum_{j=1}^n\beta^*_jx_j+e$ with $e$ being the observation noise. The exact sparsity assumption means that only a small subset of entries of the regression coefficients $\beta^*_j$'s are nonzeros or equivalently most of the covariates $x_j$'s absolutely have no effect on the response $y$, which is sometimes too restrictive in real applications. For instance, in image processing, it is standard that wavelet coefficients for images always exhibit an exponential decay, but do not need to be almost $0$ (see, e.g., \cite{joshi1995image, lustig2007sparse}). Other applications in high-dimensional scenarios include signal processing \citep{candes2006stable}, medical imaging reconstruction \citep{lustig2008compressed}, data mining \citep{orre2000bayesian} and so on, where it is not reasonable to impose exact sparsity assumption on the model space. Therefore, it is necessary to investigate the statistical properties of the nonconvex $M$-estimators when the exact sparsity assumption does not hold. In addition, as for the algorithmic aspect, the numerical procedures proposed in \cite{loh2015regularized} is based on the regularity conditions on the regularizer. Particularly, for commonly-used regularizer such as the SCAD and MCP, the side constraint is set as $g(\cdot)=\frac{1}{\lambda}\left\{\regu_\lambda(\cdot)+\frac{\mu}{2}\|\cdot\|_2^2\right\}$ for a suitable constant $\mu>0$, and thus $g(\cdot)$ is a piecewise function. The iteration takes the form
\begin{equation}\label{Loh-ite}
\beta^{t+1}\in \argmin_{\beta\in \R^n, g(\beta)\leq r}\left\{\frac{1}{2}\Big{\|}\beta-\left(\beta^t-\frac{\nabla{\bar{\loss}_m(\beta^t)}}{v}\right)\Big{\|}_2^2+\frac{\lambda}{v}g(\beta)\right\},
\end{equation}
where $\bar{\loss}_m(\cdot)=\loss_m(\cdot)-\frac{\mu}{2}\|\cdot\|_2^2$, and $\frac{1}{v}$ is the step size. This process involves projection onto the sublevel set of a piecewise function, which may cost a large amount of computation in the high-dimensional scenario.

Our main purpose in the present paper is to deal with the more general case that the coefficients of the true parameter are not almost zeros and to design a algorithm with better recovery performance. More precisely, we assume that for $q\in [0,1]$ fixed, the $\ell_q$-norm of $\beta^*$ defined as $\|\beta^*\|_q^q:=\sum_{j=1}^n|\beta^*_j|^q$ is bounded from above by a constant. Note that this assumption is reduced to the exact sparsity assumption aforementioned when $q=0$. When $q\in (0,1]$, this type of sparsity is known as the soft sparsity and has been used to analyse the minimax rate for linear regression \citep{raskutti2011minimax}. In the aspect of computation, we apply the proximal gradient method \citep{nesterov2007gradient} to solve a modified version of the nonconvex optimization problem \eqref{M-esti-intr}. The main contributions of this paper are as follows.  First, under the general sparsity assumption on the true parameter $\beta^*$ (i.e., $\|\beta^*\|_q^q\leq R_q,\ q\in[0,1]$), we provide the $\ell_2$ recovery bound for the stationary point $\tilde{\beta}$ of the nonconvex optimization problem as $\|\tilde{\beta}-\beta^*\|_2^2=O(\lambda^{2-q}R_q)$. When $\lambda$ is chosen as $\lambda=\Omega\left(\sqrt{\frac{\log n}{m}}\right)$, the recovery bound implies that the any stationary point is statistically consistent; see Theorem \ref{thm-sta}. Second, we consider the more general case that the regularizer can be decomposed as $\regu_\lambda(\cdot)=\h_\lambda(\cdot) +\Q_\lambda(\cdot)$, where $\h_\lambda$ is convex and $\Q_\lambda$ is concave. By virtue of this assumption, we establish that the proximal gradient algorithm linearly converge to a global solution of the nonconvex regularized problem; see Theorem \ref{thm-algo}. Since the proposed algorithm relies highly on the decomposition of the regularizer, this more general condition provides us the potential to consider different decompositions for the regularizer so as to construct different numerical iterations. In particular, for the SCAD and MCP regularizer, we can choose $\h_\lambda(\cdot)=\lambda\|\cdot\|_1$, then the iterative sequence is generated as follows
\begin{equation}\label{Mine-ite}
\beta^{t+1}\in \argmin_{\beta\in \R^n, \|\beta\|_1\leq r}\left\{\frac{1}{2}\Big{\|}\beta-\left(\beta^t-\frac{\nabla{\bar{\loss}_m(\beta^t)}}{v}\right)\Big{\|}_2^2+\frac{\lambda}{v}\|\beta\|_1\right\},
\end{equation}
where $\bar{\loss}_m(\cdot)=\loss_m(\cdot)+\Q_\lambda(\cdot)$, and $\frac1v$ is the step size. This numerical procedure involves a soft-threshold operator and $\ell_2$ projection onto the $\ell_1$-ball of radius $r$, and thus requires lower computational cost than \eqref{Loh-ite}. The advantage of iteration \eqref{Mine-ite} is illustrated in Fig. \ref{f-com-add} and \ref{f-com-mis}.

The remainder of this paper is organized as follows. In section \ref{sec-prob}, we provide background on nonconvex $M$-estimation problems and some regularity conditions on the loss function and the regularizer. In section \ref{sec-main}, we establish our main results on statistical consistency and algorithmic rate of convergence. In section \ref{sec-simul}, we perform several numerical experiments to demonstrate our theoretical results. We conclude this paper in Section \ref{sec-concl}. Technical proofs are presented in Appendix.

We end this section by introducing some notations for future reference. We use Greek lowercase letters $\beta,\delta$ to denote the vectors, capital letters $J,S$ to denote the index sets. For a vector $\beta\in \R^n$ and an index set $S\subseteq \{1,2,\dots,n\}$, we use $\beta_S$ to denote the vector in which $(\beta_S)_i=\beta_i$ for $i\in S$ and zero elsewhere, $|S|$ to denote the cardinality of $S$, and $S^c=\{1,2,\dots,n\}\setminus S$ to denote the complement of $S$. A vector $\beta$ is supported on $S$ if and only if $S=\{i\in \{1,2,\dots,n\}:\beta_i\neq 0\}$, and $S$ is the support of $\beta$ denoted by ${\text{supp}}(\beta)$, namely ${\text{supp}}(\beta)=S$. For $m\geq 1$, let $\mathbb{I}_m$ stand for the $m\times m$ identity matrix. For a matrix $X\in \R^{m\times n}$, let $X_{ij}\ (i=1,\dots,m,j=1,2,\cdots,n)$ denote its $ij$-th entry, $X_{i\cdot}\ (i=1,\dots,m)$ denote its $i$-th row, $X_{\cdot j}\ (j=1,2,\cdots,n)$ denote its $j$-th column, and diag$(X)$ stand for the diagonal matrix with its diagonal entries equal to $X_{11},X_{22},\cdots,X_{nn}$. We write $\lambda_{\text{min}}(X)$ and $\lambda_{\text{max}}(X)$ to denote the minimal and maximum eigenvalues of a matrix $X$, respectively. For a function $f:\R^n\to \R$, $\nabla f$ is used to denote a gradient or subgradient depending on whether $f$ is differentiable or nondifferentiable but convex, respectively.

\section{Problem setup}\label{sec-prob}

In this section we begin with a precise formulation
of the problem, and then impose some suitable assumptions on the loss function as well as the regularizer.

\subsection{Nonconvex regularized $M$-estimation}

Following the work of \cite{negahban2012unified, wainwright2014structured}, we first review some basic concepts on the $M$-estimation problem. Let $Z_1^m:=(Z_1,Z_2,\cdots,Z_m)$ denote a sample of $m$ identically independent observations of a given random variable $Z:\mathcal{S} \to \mathcal{Z}$ defined on the probability space $(\mathcal{S},\mathcal{F},\mathbb{P})$, where $\mathbb{P}$ lies within a parameterized set $\mathcal{P}=\{\mathbb{P}_\beta: \beta\in \Theta \subseteq \R^n\}$. It is always assumed that there is a ``true" probability distribution $\mathbb{P}_{\beta^*}\in \mathcal{P}$ that generates the observed data $Z_1^m$ and the goal is to estimate the unknown true parameter $\beta^*\in \Theta$. To this end, a loss function $\loss_m:\R^n\times \mathcal{Z}^m\to \R_+$ is introduced, whose value $\loss_m(\beta;Z_1^m)$ measures the ``fit'' between any parameter $\beta\in \Theta$ and the observed data set $Z_1^m\in \mathcal{Z}^m$ and smaller value means better fit.

However, when the number of observations $m$ is smaller than the ambient dimension $n$, consistent estimators can no longer be obtained. Fortunately, there is empirical evidence showing that the underlying true parameter $\beta^*$ in the high-dimensional space is sparse in a wide range of applications; see, e.g., \cite{joshi1995image, lustig2007sparse}. One popular way to measure the degree of sparsity is to use the $\ell_q$-ball\footnote{Accurately speaking, when $q\in[0,1)$, these sets are not real ``ball''s , as they fail to be convex.}, which is defined as, for $q\in [0,1]$, and a radius $R_q>0$,
\begin{equation}\label{lq-ball}
\B_q(R_q):=\{\beta\in \R^n:||\beta||_q^q=\sum_{j=1}^n|\beta_j|^q\leq R_q\}.
\end{equation}
Note that the $\ell_0$-ball corresponds to the case of exact sparsity, meaning that any vector $\beta\in \B_0(R_0)$ is supported on a set of cardinality at most $R_0$, while the $\ell_q$-ball for $q\in (0,1]$ corresponds to the case of soft sparsity, which enforces a certain decay rate on the ordered elements of $\beta\in \B_q(R_q)$. The exact sparsity assumption has been widely used for establishing statistical recovery bounds, while the soft sparsity assumption attracts relatively little attention. Throughout this paper, we fix $q\in [0,1]$, and assume that the true parameter $\beta^*\in \B_q(R_q)$ unless otherwise specified.

Now for the purpose of estimating $\beta^*$ based on the observed data $Z_1^m$, many researchers proposed to consider the regularized $M$-estimator (see, e.g., \cite{agarwal2012fast, negahban2012unified}), which is formulated as
\begin{equation}\label{M-esti}
\hat{\beta} \in \argmin_{\beta\in \Omega\subseteq \R^n}\{\loss_m(\beta;Z_1^m)+\regu_\lambda(\beta)\},
\end{equation}
where $\lambda>0$ is a user-defined regularization parameter, and $\regu_\lambda:\R^n\to \R$ is a regularizer depending on $\lambda$ and is assumed to be separable across coordinates, written as $\regu_\lambda(\beta)=\sum_{j=1}^n\rho_\lambda(\beta_j)$ with the decomposable component $\rho_\lambda:\R\to \R$ specified in the following. The loss function $\loss_m$ is required to be differentiable, but do not need to be convex. The regularizer $\regu_\lambda$, which serves to impose certain type of sparsity constraint on the estimator, can also be nonconvex. Due to this potential nonconvexity, we include a side constraint $g:\R^n\to \R_+$, which is required to be convex and satisfy
\begin{equation}\label{g-l1}
g(\beta)\geq \omega\|\beta\|_1,\quad \forall\beta\in \R^n
\end{equation}
for some positive number $\omega>0$. The feasible region is then specialized as
\begin{equation}\label{feasible}
\Omega:=\{\beta\in \R^n: g(\beta)\leq r\}.
\end{equation}
The parameter $r>0$ must be chosen carefully to ensure $\beta^*\in \Omega$. Any point $\beta\in \Omega$ will also satisfy $\|\beta||_1\leq r/\omega$, and provided that $\loss_m$ and $\regu_\lambda$ are continuous, it is guaranteed by the Weierstrass extreme value theorem that a global solution $\hat{\beta}$ always exists. Hereinafter in order to ease the notation, we adopt the shorthand $\loss_m(\cdot)$ for $\loss_m(\cdot;Z_1^m)$.

\subsection{Nonconvex loss function and restricted strong convexity/smoothness}

Throughout this paper, the loss function $\loss_m$ is required to be differentiable with respect to $\beta$, but needs not to be convex. Instead, some weaker conditions known as restricted strong convexity (RSC) and restricted strong smoothness (RSM) are required, which have been discussed precisely in former literature \citep{agarwal2012fast, loh2015regularized}. Specifically, the RSC/RSM conditions imposed on the loss function $\loss_m$ are the same as those used in \cite{loh2015regularized}, thus we here only provide the expressions so as to make this paper complete. For more detailed discussions, see \cite{loh2015regularized}.

We begin with defining the first-order Taylor series expansion around a vector $\beta'$ in the direction of $\beta$ as
\begin{equation}\label{taylor}
\T(\beta,\beta'):=\loss_m(\beta)-\loss_m(\beta')-\langle \nabla\loss_m(\beta'),\beta-\beta' \rangle.
\end{equation}
Then concretely speaking, the RSC condition takes two types of forms, one is used for the analysis of statistical recovery bounds, defined as
\begin{subequations}\label{sta-RSC}
\begin{numcases}{\langle \nabla\loss_m(\beta^*+\delta)-\nabla\loss_m(\beta^*),\delta \rangle\geq}
\gamma_1\|\delta\|_2^2-\tau_1\frac{\log n}{m}\|\delta\|_1^2,\quad \forall\|\delta\|_2\leq 3,\label{sta-RSC1}\\
\gamma_2\|\delta\|_2-\tau_2\sqrt{\frac{\log n}{m}}\|\delta\|_1,\quad \forall\|\delta\|_2>3,\label{sta-RSC2}
\end{numcases}
\end{subequations}
where $\gamma_i\ (i=1,2)$ are positive constants and $\tau_i\ (i=1,2)$ are nonnegative constants; the other one is used for the analysis of algorithmic convergence rate, defined in terms of the Taylor series error \eqref{taylor}
\begin{subequations}\label{alg-RSC}
\begin{numcases}{\T(\beta,\beta')\geq}
\gamma_3\|\beta-\beta'\|_2^2-\tau_3\frac{\log n}{m}\|\beta-\beta'\|_1^2,\quad \forall\|\beta-\beta'\|_2\leq 3,\label{alg-RSC1}\\
\gamma_4\|\beta-\beta'\|_2-\tau_4\sqrt{\frac{\log n}{m}}\|\beta-\beta'\|_1,\quad \forall\|\beta-\beta'\|_2>3,\label{alg-RSC2}
\end{numcases}
\end{subequations}
where $\gamma_i\ (i=3,4)$ are positive constants and $\tau_i\ (i=3,4)$ are nonnegative constants. In addition, the RSM condition is also defined by the Taylor series error \eqref{taylor} as follows:
\begin{equation}\label{alg-RSM}
\T(\beta,\beta')\leq \gamma_5\|\beta-\beta'\|_2^2+\tau_5\frac{\log n}{m}\|\beta-\beta'\|_1^2,\quad \forall \beta,\beta'\in \R^n,
\end{equation}
where $\gamma_5$ is a positive constant and $\tau_5$ is a nonnegative constant.

\subsection{Nonconvex regularizer and regularity conditions}

Now we impose some regularity conditions on the nonconvex regularizer, which are defined in terms of the decomposable component $\rho_\lambda:\R\to \R$.
\begin{Assumption}\mbox{}\par\label{asup-regu}
\begin{enumerate}[\rm(i)]
\item $\rho_\lambda$ satisfies $\rho_\lambda(0)=0$ and is symmetric around zero, that is, $\rho_\lambda(t)=\rho_\lambda(-t)$ for all $t\in \R$;
\item On the nonnegative real line, $\rho_\lambda$ is nondecreasing;
\item For $t>0$, the function $t\mapsto \frac{\rho_\lambda(t)}{t}$ is nonincreasing in $t$;
\item $\rho_\lambda$ is differentiable for all $t\neq 0$ and subdifferentiable at $t=0$, with $\lim\limits_{t\to 0^+}\rho'_\lambda(t)=\lambda L$;
\item $\rho_\lambda$ is subadditive, that is, $\rho_\lambda(t+t')\leq \rho_\lambda(t)+\rho_\lambda(t')$ for all $t,t'\in \R$;
\item $\rho_\lambda$ can be decomposed as $\rho_\lambda(\cdot)=h_\lambda(\cdot)+q_\lambda(\cdot)$, where $h_\lambda$ is convex, and $q_\lambda$ is concave with $q_\lambda(0)=q'_\lambda(0)=0$, $q_\lambda(t)=q_\lambda(-t)$ for all $t\in \R$, and for $t>t'$, there exists two constants $\mu_1\geq 0$ and $\mu_2\geq 0$ such that
    \begin{equation}\label{cond-qlambda}
    -\mu_1\leq \frac{q'_\lambda(t)-q'_\lambda(t')}{t-t'}\leq -\mu_2\leq 0.
    \end{equation}
\end{enumerate}
\end{Assumption}
Conditions (i)-(iv) are the same as those proposed in \cite{loh2015regularized}, and we here explicitly add the condition of subadditivity, though it is relatively mild and are satisfied by a wide range of regularizers. Note that the last condition is a generalization of the weak convexity assumption \cite[Assumption 1(v)]{loh2015regularized}: There exists $\mu>0$, such that $\rho_{\lambda,\mu}(t):=\rho_\lambda(t)+\frac{\mu}{2}t^2$ is convex.

As we will see in the next section, one of the main advantage of adopting condition (vi) in Assumption \ref{asup-regu} is that the proposed algorithm to solve the optimization problem \eqref{M-esti} highly depends on the decomposition in Assumption \ref{asup-regu}(vi) for the regularizer. For general regularizers, condition (vi) provides us the potential to consider different decompositions so as to construct different estimators as well as the iterations.
In particular, as is shown in Example \ref{decomp}, besides the natural decomposition for the SCAD and MCP regularizer inspired by \citet[Assumption 1(v)]{loh2015regularized}, there exists another simpler decomposition, which leads to iterations with simpler forms. Moreover, it is easy to construct functions that satisfy our condition (vi) while do not satisfy \citet[Assumption 1(v)]{loh2015regularized}, but we omit the construction here as many commonly-used regularizers such as the $\ell_1$-norm regularizer $\lambda\|\cdot\|_1$ (Lasso), SCAD and MCP satisfy both our condition (vi) and \citet[Assumption 1(v)]{loh2015regularized}.

It is easy to check that the Lasso regularizer satisfies all these conditions in Assumption \ref{asup-regu}. Other nonconvex regularizers such as the SCAD and MCP regularizers are also contained in our framework. More precisely, it has been shown in \cite{loh2015regularized} that both the SCAD and MCP satisfy conditions (i)-(v) with $L=1$ for condition (iv). To verify condition (vi), in the following, we provide an example showing two different decompositions.
\begin{Example}\label{decomp}
\indent Consider the SCAD regularizer:
\begin{equation*}
\rho_\lambda(t):=\left\{
\begin{array}{l}
\lambda|t|,\ \ \text{if}\ \  |t|\leq \lambda,\\
-\frac{t^2-2a\lambda|t|+\lambda^2}{2(a-1)},\ \  \text{if}\ \ \lambda<|t|\leq a\lambda,\\
\frac{(a+1)\lambda^2}{2},\ \ \text{if}\ \ |t|>a\lambda,
\end{array}
\right.
\end{equation*}
where $a>2$ is a fixed parameter, and the MCP regularizer:
\begin{equation*}
\rho_\lambda(t):=\left\{
\begin{array}{l}
\lambda|t|-\frac{t^2}{2b},\ \ \text{if}\ \  |t|\leq b\lambda,\\
\frac{b\lambda^2}{2},\ \ \text{if}\ \ |t|>b\lambda,
\end{array}
\right.
\end{equation*}
where $b>0$ is a fixed parameter.
To verify condition (vi), the first way is to set
\begin{equation}\label{SCAD-h-1}
h_\lambda(t)=\left\{
\begin{array}{l}
\lambda|t|+\frac{t^2}{2(a-1)},\ \ \text{if}\ \  |t|\leq \lambda,\\
\frac{2a\lambda|t|-\lambda^2}{2(a-1)},\ \  \text{if}\ \ \lambda<|t|\leq a\lambda,\quad q_\lambda(t)=-\frac{t^2}{2(a-1)}\\
\frac{t^2}{2(a-1)}+\frac{(a+1)\lambda^2}{2},\ \ \text{if}\ \ |t|>a\lambda,
\end{array}
\right.
\end{equation}
for SCAD, and
\begin{equation}\label{MCP-h-1}
h_\lambda(t)=\left\{
\begin{array}{l}
\lambda|t|,\ \ \text{if}\ \  |t|\leq b\lambda,\\
\frac{t^2}{2b}+\frac{b\lambda^2}{2},\ \ \text{if}\ \ |t|>b\lambda,\quad q_\lambda(t)=-\frac{t^2}{2b}
\end{array}
\right.
\end{equation}
for MCP, then both the two regularizers satisfy condition (vi). In fact, this decomposition is inspired by \citet[Assumption 1(v)]{loh2015regularized}.
The other way is to set $h_\lambda(\cdot)=\lambda|\cdot|$ for both SCAD and MCP, then
\begin{equation}\label{SCAD-q-2}
q_\lambda(t)=\left\{
\begin{array}{l}
0,\ \ \text{if}\ \  |t|\leq \lambda,\\
-\frac{t^2-2\lambda|t|+\lambda^2}{(2(a-1)},\ \  \text{if}\ \ \lambda<|t|\leq a\lambda,\\
\frac{(a+1)\lambda^2}{2}-\lambda|t|,\ \ \text{if}\ \ |t|>a\lambda,
\end{array}
\right.
\end{equation}
for SCAD with $\mu_1=\frac{1}{a-1}$ and $\mu_2=0$, and
\begin{equation}\label{MCP-q-2}
q_\lambda(t)=\left\{
\begin{array}{l}
-\frac{t^2}{2b},\ \ \text{if}\ \  |t|\leq b\lambda,\\
\frac{b\lambda^2}{2}-\lambda|t|,\ \  \text{if}\ \ |t|> b\lambda,
\end{array}
\right.
\end{equation}
for MCP with $\mu_1=\frac{1}{b}$ and $\mu_2=0$, respectively. Hence, both the SCAD and MCP regularizers satisfy our condition (vi) in Assumption \eqref{asup-regu}.
\end{Example}
We shall see that the second decomposition plays an important role in constructing iterations with more simple forms, so as to be solved more efficiently, with SCAD and MCP as regularizers in the next section.

Now for notational simplicity we define
\begin{equation}\label{HQ-lambda}
\h_\lambda(\cdot):=\sum_{j=1}^{n}h_\lambda(\cdot)\quad \mbox{and}\quad
\Q_\lambda(\cdot):=\sum_{j=1}^{n}q_\lambda(\cdot),
\end{equation}
that is, $\h_\lambda$ and $\Q_\lambda$ denote the decomposable convex component and concave component of the nonconvex regularizer $\regu_\lambda$, respectively.

\section{Main results}\label{sec-main}

In this section, we establish our main results and proofs including statistical guarantee and algorithmic convergence rate. We begin with several lemmas, which are beneficial to the proofs of Theorems \ref{thm-sta} and \ref{thm-algo}. Recall the true parameter $\beta^*\in \B_q(R_q)$. Then for any positive number $\eta>0$, we define the set corresponding to $\beta^*$:
\begin{equation}\label{S-eta}
S_\eta:=\{j\in\{1,2,\cdots,n\}:|\beta^*_j|>\eta\}.
\end{equation}
Then, by a standard argument (see, e.g., \cite{negahban2012unified}), one checks that
\begin{equation}\label{s-eta}
|S_\eta|\leq \eta^{-q}R_q\quad \mbox{and}\quad \|\beta^*_{S_\eta^c}\|_1\leq\eta^{1-q}R_q.
\end{equation}
\begin{Lemma}\label{lem-regu}
Let $\beta,\ \delta\in \R^n$ and $S\subseteq \{1,2,\cdots,n\}$ be any subset with $|S|=s$.
Let $J\subseteq \{1,2,\cdots,n\}$ be the index set of the $s$ largest elements of $\delta$ in absolute value.
Then one has that
\begin{equation}\label{lem-regu-1}
\regu_\lambda(\beta)-\regu_\lambda(\beta+\delta)\leq
\lambda L(\|\delta_J\|_1-\|\delta_{J^c}\|_1)+2\lambda L\|\beta_{S^c}\|_1.
\end{equation}
\end{Lemma}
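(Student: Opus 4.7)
The plan is to exploit the separability of $\regu_\lambda$ across coordinates to reduce the inequality to per-coordinate comparisons, apply subadditivity on each index in two different directions depending on whether $j\in S$ or $j\in S^c$, and then convert the resulting $S$-indexed sums into $J$-indexed ones by using that $J$ collects the $s$ largest entries of $\delta$ in magnitude.

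Before attacking the sum, I would record two consequences of Assumption \ref{asup-regu}. From (iii)--(iv), the fact that $t\mapsto\rho_\lambda(t)/t$ is nonincreasing on $(0,\infty)$ with limiting value $\lambda L$ at $0^+$ yields the pointwise upper bound $\rho_\lambda(t)\le \lambda L|t|$, and hence $\regu_\lambda(\gamma)\le \lambda L\|\gamma\|_1$ for every $\gamma\in\R^n$. From (i) and (v), subadditivity together with symmetry gives the two-sided inequality $|\rho_\lambda(a+b)-\rho_\lambda(a)|\le \rho_\lambda(b)$. Applying this in opposite directions, for $j\in S$ one obtains $\rho_\lambda(\beta_j)-\rho_\lambda(\beta_j+\delta_j)\le \rho_\lambda(\delta_j)$, while for $j\in S^c$ the alternative rearrangement yields $\rho_\lambda(\beta_j)-\rho_\lambda(\beta_j+\delta_j)\le 2\rho_\lambda(\beta_j)-\rho_\lambda(\delta_j)$. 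Summing index-by-index produces the intermediate bound
\begin{equation*}
\regu_\lambda(\beta)-\regu_\lambda(\beta+\delta)\le \regu_\lambda(\delta_S)-\regu_\lambda(\delta_{S^c})+2\regu_\lambda(\beta_{S^c}),
\end{equation*}
and feeding $\regu_\lambda(\beta_{S^c})\le \lambda L\|\beta_{S^c}\|_1$ into the last term already yields the $2\lambda L\|\beta_{S^c}\|_1$ summand appearing in \eqref{lem-regu-1}.

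The step I expect to be the main obstacle is the transfer from the arbitrary set $S$ to the ordered set $J$. The key observation is that $|J|=|S|=s$ forces $|S\cap J^c|=|S^c\cap J|$, and because $J$ picks out the $s$ largest magnitudes of $\delta$, every $|\delta_j|$ with $j\in S\cap J^c$ is dominated by every $|\delta_{j'}|$ with $j'\in S^c\cap J$. Pairing these indices and invoking the monotonicity in (ii) shows that
\begin{equation*}
\regu_\lambda(\delta_S)-\regu_\lambda(\delta_{S^c})\le \regu_\lambda(\delta_J)-\regu_\lambda(\delta_{J^c}).
\end{equation*}
The final collapse to $\lambda L(\|\delta_J\|_1-\|\delta_{J^c}\|_1)$ then comes from applying the upper bound $\rho_\lambda(t)\le \lambda L|t|$ to the $\delta_J$ piece and using the paired ordering once more---now refined by the first-order behavior $\rho_\lambda(t)\sim \lambda L\,|t|$ at the origin together with the fact that $J^c$ carries the smaller magnitudes---to control $\regu_\lambda(\delta_{J^c})$ from below by $\lambda L\|\delta_{J^c}\|_1$, which closes the argument for \eqref{lem-regu-1}.
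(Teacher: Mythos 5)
The first two-thirds of your argument is sound and follows the paper's own route: the per-coordinate use of subadditivity and symmetry yields the intermediate bound $\regu_\lambda(\beta)-\regu_\lambda(\beta+\delta)\le\regu_\lambda(\delta_S)-\regu_\lambda(\delta_{S^c})+2\regu_\lambda(\beta_{S^c})$, the estimate $\regu_\lambda(\beta_{S^c})\le\lambda L\|\beta_{S^c}\|_1$ is correct, and your pairing argument for $\regu_\lambda(\delta_S)-\regu_\lambda(\delta_{S^c})\le\regu_\lambda(\delta_J)-\regu_\lambda(\delta_{J^c})$, based on $|S\cap J^c|=|S^c\cap J|$ and the monotonicity of $\rho_\lambda$ in $|t|$, is exactly the rearrangement the paper invokes via ``the definition of the set $J$''.

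The final step, however, contains a genuine gap. You bound $\regu_\lambda(\delta_J)\le\lambda L\|\delta_J\|_1$ and then claim to control $\regu_\lambda(\delta_{J^c})$ \emph{from below} by $\lambda L\|\delta_{J^c}\|_1$. That lower bound is the reverse of the inequality you yourself derived at the outset: conditions (iii)--(iv) give $\rho_\lambda(t)\le\lambda L|t|$ for every $t$, with strict inequality for SCAD once $|t|>\lambda$ and for MCP for every $t\ne 0$. The fact that $J^c$ carries the smaller magnitudes does not make those magnitudes small in absolute terms, only smaller than the entries indexed by $J$; for SCAD with all $|\delta_j|>a\lambda$ one has $\regu_\lambda(\delta_{J^c})=(n-s)\frac{(a+1)\lambda^2}{2}$, which falls far short of $\lambda\|\delta_{J^c}\|_1$, and in that regime $\regu_\lambda(\delta_J)-\regu_\lambda(\delta_{J^c})$ stays bounded while $\lambda L(\|\delta_J\|_1-\|\delta_{J^c}\|_1)$ can be made arbitrarily negative. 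So the inequality $\regu_\lambda(\delta_J)-\regu_\lambda(\delta_{J^c})\le\lambda L(\|\delta_J\|_1-\|\delta_{J^c}\|_1)$ cannot be reached by the pointwise bounds you have in hand. This is precisely the step the paper does not prove internally but outsources to \citet[Lemma 6]{loh2013local}; a self-contained proof must reproduce whatever structure that lemma actually exploits, not the shortcut $\rho_\lambda(t)\ge\lambda L|t|$, which fails for every genuinely nonconvex regularizer in the paper's framework.
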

\begin{proof}
By the decomposiability and the subadditivity of the regularizer $\regu_\lambda$, one has that
\begin{equation}\label{lem-regu-2}
\begin{aligned}
\regu_\lambda(\beta)-\regu_\lambda(\beta+\delta)
&\leq \regu_\lambda(\delta_S)+\regu_\lambda(\beta_{S^c})-\regu_\lambda(\beta_{S^c}+\delta_{S^c})\\ &\leq \regu_\lambda(\delta_S)-\regu_\lambda(\delta_{S^c})+2\regu_\lambda(\beta_{S^c})\\
&\leq \regu_\lambda(\delta_J)-\regu_\lambda(\delta_{J^c})+2\regu_\lambda(\beta_{S^c}),
\end{aligned}
\end{equation}
where the last inequality is from the definition of the set $J$. Then it follows from \citet[Lemma 6]{loh2013local} (with $A=J$ and $k=s$) and \citet[Lemma 4(a)]{loh2015regularized} that
$$\regu_\lambda(\delta_J)-\regu_\lambda(\delta_{J^c})\leq \lambda L(\|\delta_J\|_1-\|\delta_{J^c}\|_1),$$
$$\regu_\lambda(\beta_{S^c})\leq \lambda L\|\beta_{S^c}\|_1.$$
Combining these two inequalities with \eqref{lem-regu-2}, we obtain \eqref{lem-regu-1}. The proof is complete.
\end{proof}
The following two lemmas tell us some general properties of $\h_\lambda$ and $\Q_\lambda$ defined in \eqref{HQ-lambda}, respectively.
\begin{Lemma}\label{lem-hlambda}
Let $\h_\lambda$ be defined in \eqref{HQ-lambda}. Then it holds that
\begin{equation*}
\h_\lambda(\beta)\geq \lambda L\|\beta\|_1,\quad \forall\beta\in \R^n.
\end{equation*}
\begin{proof}
It suffices to show that for all $t\in \R$,
\begin{equation}\label{H-lambda-1}
h_\lambda(t)\geq \lambda L|t|.
\end{equation}
When $t=0$, \eqref{H-lambda-1} follows trivially by Assumption \ref{asup-regu}. To consider the case when $t\not =0$, by the symmetry, we may assume, without loss of generality, that $t>0$. Then since $h_\lambda$ is convex, one has that
for any $t'\in (0,t)$,
\begin{equation*}
\frac{h_\lambda(t)-h_\lambda(0)}{t-0}\geq h'_\lambda(t')=\rho'_\lambda(t')-q'_\lambda(t').
\end{equation*}
Taking $t'\to 0^+$, we have that \eqref{H-lambda-1} holds.
The proof is complete.
\end{proof}
\end{Lemma}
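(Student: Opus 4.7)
The plan is to reduce the vector statement coordinatewise to the scalar inequality $h_\lambda(t) \ge \lambda L |t|$ for every $t\in\R$, since $\h_\lambda(\beta)=\sum_{j=1}^n h_\lambda(\beta_j)$ and $\|\beta\|_1 = \sum_{j=1}^n |\beta_j|$ add up in parallel. Once the scalar bound is in hand, summing over coordinates immediately gives the lemma.

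For the scalar inequality, I would first dispense with $t=0$: Assumption~\ref{asup-regu}(i) gives $\rho_\lambda(0)=0$, and condition~(vi) gives $q_\lambda(0)=0$, so $h_\lambda(0)=\rho_\lambda(0)-q_\lambda(0)=0$, matching the right-hand side. Next I would use symmetry to reduce to $t>0$: both $\rho_\lambda$ and $q_\lambda$ are even by (i) and (vi), hence so is $h_\lambda=\rho_\lambda-q_\lambda$, so it suffices to prove $h_\lambda(t)\ge \lambda L\, t$ for $t>0$.

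For a fixed $t>0$, the main idea is to exploit the convexity of $h_\lambda$ from Assumption~\ref{asup-regu}(vi): for any $t'\in(0,t)$ the secant slope dominates the right-derivative, so
\[
\frac{h_\lambda(t)-h_\lambda(0)}{t} \;\ge\; h'_\lambda(t') \;=\; \rho'_\lambda(t')-q'_\lambda(t'),
\]
where the equality uses that $h_\lambda=\rho_\lambda-q_\lambda$ is differentiable on $(0,\infty)$ by Assumption~\ref{asup-regu}(iv) and~(vi). Then I would take $t'\to 0^+$: by (iv), $\rho'_\lambda(t')\to \lambda L$; and by (vi) we have $q'_\lambda(0)=0$ together with $|q'_\lambda(t')-q'_\lambda(0)|\le \mu_1 t'$, forcing $q'_\lambda(t')\to 0$. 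The result is $h_\lambda(t)/t \ge \lambda L$, i.e.\ $h_\lambda(t)\ge \lambda L t$, as required.

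The only subtle point is justifying the passage to the right-derivative at $0^+$: it requires combining the one-sided limit $\lim_{t'\to 0^+}\rho'_\lambda(t')=\lambda L$ from (iv) with the continuity of $q'_\lambda$ at $0$ from (vi), and noting that $h_\lambda$ is differentiable wherever $\rho_\lambda$ and $q_\lambda$ are. None of these are deep, but they are the step that actually uses the specific Assumption~\ref{asup-regu} hypotheses; after that, summing $h_\lambda(\beta_j)\ge \lambda L |\beta_j|$ over $j$ finishes the proof.
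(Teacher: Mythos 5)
Your proposal is correct and follows essentially the same route as the paper's proof: coordinatewise reduction, the $t=0$ and symmetry reductions, the convexity secant bound $\frac{h_\lambda(t)-h_\lambda(0)}{t}\ge h'_\lambda(t')=\rho'_\lambda(t')-q'_\lambda(t')$, and the limit $t'\to 0^+$. Your explicit justification that $q'_\lambda(t')\to 0$ via the Lipschitz-type bound from condition (vi) is a detail the paper leaves implicit, but the argument is the same.
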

\begin{Lemma}\label{lem-qlambda}
Let $\Q_\lambda$ be defined in \eqref{HQ-lambda}. Then for any $\beta,\beta'\in \R^n$, the following relations are true:
\begin{subequations}
\begin{align}
\langle \nabla\Q_\lambda(\beta)-\nabla\Q_\lambda(\beta'),\beta-\beta' \rangle \geq -\mu_1\|\beta-\beta'\|_2^2,\label{lem-qlambda-11}\\
\langle \nabla\Q_\lambda(\beta)-\nabla\Q_\lambda(\beta'),\beta-\beta' \rangle \leq -\mu_2\|\beta-\beta'\|_2^2, \label{lem-qlambda-12}\\
\Q_\lambda(\beta)\geq \Q_\lambda(\beta')+\langle \nabla\Q_\lambda(\beta'),\beta-\beta' \rangle- \frac{\mu_1}{2}\|\beta-\beta'\|_2^2,\label{lem-qlambda-13}\\
\Q_\lambda(\beta)\leq \Q_\lambda(\beta')+\langle \nabla\Q_\lambda(\beta'),\beta-\beta' \rangle- \frac{\mu_2}{2}\|\beta-\beta'\|_2^2.\label{lem-qlambda-14}
\end{align}
\end{subequations}
\end{Lemma}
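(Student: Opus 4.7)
The plan is to exploit the separability of $\Q_\lambda$ so that all four assertions reduce to one-dimensional statements about $q_\lambda$, which then follow directly from the two-sided derivative bound in Assumption \ref{asup-regu}(vi). By \eqref{HQ-lambda}, $\Q_\lambda(\beta)=\sum_{j=1}^n q_\lambda(\beta_j)$, so its gradient is $\nabla\Q_\lambda(\beta)=(q'_\lambda(\beta_1),\dots,q'_\lambda(\beta_n))^\top$, and both the inner products in \eqref{lem-qlambda-11}--\eqref{lem-qlambda-12} and the values in \eqref{lem-qlambda-13}--\eqml{lem-qlambda-14} decompose into sums of scalar analogues over the $n$ coordinates. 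Thus it suffices to establish the scalar versions
\begin{align*}
-\mu_1(t-t')^2 \;\leq\; (q'_\lambda(t)-q'_\lambda(t'))(t-t') \;&\leq\; -\mu_2(t-t')^2, \\
q_\lambda(t') + q'_\lambda(t')(t-t') - \tfrac{\mu_1}{2}(t-t')^2 \;\leq\; q_\lambda(t) \;&\leq\; q_\lambda(t') + q'_\lambda(t')(t-t') - \tfrac{\mu_2}{2}(t-t')^2
\end{align*}
for all $t,t'\in\R$, and then sum over $j=1,\dots,n$.

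For the first pair of scalar inequalities I would argue as follows. If $t=t'$ they are trivial. Otherwise, by the symmetry of the product $(q'_\lambda(t)-q'_\lambda(t'))(t-t')$ under swapping $t\leftrightarrow t'$, we may assume $t>t'$, so that $t-t'>0$; then multiplying the hypothesis \eqref{cond-qlambda} through by $t-t'$ yields exactly the desired chain, and summing the squared-difference bounds over coordinates gives \eqref{lem-qlambda-11} and \eqref{lem-qlambda-12}.

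For the two function-value inequalities I would pass from the derivative bound to a convexity/concavity statement on a quadratically perturbed version of $q_\lambda$. Define $\phi_1(t):=q_\lambda(t)+\tfrac{\mu_1}{2}t^2$ and $\phi_2(t):=q_\lambda(t)+\tfrac{\mu_2}{2}t^2$; from \eqref{cond-qlambda} it follows that for $t>t'$, $\phi_1'(t)-\phi_1'(t')=q'_\lambda(t)-q'_\lambda(t')+\mu_1(t-t')\geq 0$ and $\phi_2'(t)-\phi_2'(t')\leq 0$, so $\phi_1$ is convex and $\phi_2$ is concave on $\R$. Applying the standard first-order inequalities $\phi_1(t)\geq\phi_1(t')+\phi_1'(t')(t-t')$ and $\phi_2(t)\leq\phi_2(t')+\phi_2'(t')(t-t')$, then expanding the quadratic terms and collecting, the cross term $\mu_i t'(t-t')+\tfrac{\mu_i}{2}(t'^2-t^2)$ simplifies to $-\tfrac{\mu_i}{2}(t-t')^2$ for $i=1,2$, which yields the desired scalar bounds. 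Summing over $j$ gives \eqref{lem-qlambda-13} and \eqml{lem-qlambda-14}.

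The argument is routine, and the main (minor) point of care is algebraic: correctly regrouping the quadratic contributions after applying convexity/concavity of $\phi_1,\phi_2$ so that the $\tfrac{\mu_i}{2}t^2$ and $\tfrac{\mu_i}{2}t'^2$ terms combine with $\mu_i t'(t-t')$ into a single $-\tfrac{\mu_i}{2}(t-t')^2$. An equivalent route, which I would mention only as a remark, is to integrate the derivative bound along the segment from $t'$ to $t$ via the fundamental theorem of calculus — this is legitimate because \eqref{cond-qlambda} makes $q'_\lambda$ Lipschitz, hence absolutely continuous on bounded intervals — and leads to the same inequalities without invoking the auxiliary functions.
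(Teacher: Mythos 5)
Your proof is correct and follows essentially the same route as the paper: inequalities \eqref{lem-qlambda-11}--\eqref{lem-qlambda-12} are obtained coordinate-wise by multiplying \eqref{cond-qlambda} by $(t-t')$ and summing, and \eqref{lem-qlambda-13}--\eqref{lem-qlambda-14} are deduced from these gradient-monotonicity bounds via standard first-order convexity inequalities. The only difference is cosmetic: the paper cites Nesterov's Theorems 2.1.5 and 2.1.9 for the $n$-dimensional convex function $-\Q_\lambda$, whereas you prove the same implication by hand at the scalar level through the quadratically perturbed functions $q_\lambda(t)+\tfrac{\mu_i}{2}t^2$, which makes the argument self-contained but is not a genuinely different method.
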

\begin{proof}
By \eqref{cond-qlambda}, we have that for any $j=1,2,\cdots,n$,
\begin{equation*}
-\mu_1(\beta_j-\beta_j')^2\leq (q'_\lambda(\beta_j)-q'_\lambda(\beta_j'))(\beta_j-\beta_j')\leq -\mu_2(\beta_j-\beta_j')^2,
\end{equation*}
from which \eqref{lem-qlambda-11} and \eqref{lem-qlambda-12} follow directly. Then by \citet[Theorem 2.1.5 and Theorem 2.1.9]{nesterov2013introductory}, it follows from \eqref{lem-qlambda-11} and \eqref{lem-qlambda-12} that
the convex function $-\Q_\lambda(\beta)$ satisfies
\begin{align*}
-\Q_\lambda(\beta)&\leq -\Q_\lambda(\beta')+\langle \nabla(-\Q_\lambda(\beta')),\beta-\beta' \rangle+ \frac{\mu_1}{2}\|\beta-\beta'\|_2^2,\\
-\Q_\lambda(\beta)&\geq -\Q_\lambda(\beta')+\langle \nabla(-\Q_\lambda(\beta')),\beta-\beta' \rangle- \frac{\mu_2}{2}\|\beta-\beta'\|_2^2,
\end{align*}
which respectively implies that the function $\Q_\lambda(\beta)$ satisfies \eqref{lem-qlambda-13} and \eqref{lem-qlambda-14}.
The proof is complete.
\end{proof}

\subsection{Statistical results}

Recall that the feasible region $\Omega$ is specified in \eqref{feasible}. We shall provide the recovery bound for each stationary point $\tilde{\beta}\in \Omega$ of the optimization problem \eqref{M-esti}, that is, $\tilde{\beta}$ satisfies the first-order necessary condition:
\begin{equation}\label{1st-cond}
\langle \nabla\loss_m(\tilde{\beta})+\nabla\regu_\lambda(\tilde{\beta}),\beta-\tilde{\beta} \rangle\geq 0,\quad \text{for all}\ \beta\in \Omega.
\end{equation}
\begin{Theorem}\label{thm-sta}
Let $R_q>0$ and $r>0$ be positive numbers such that $\beta^*\in \B_q(R_q)\cap \Omega$. Let $\tilde{\beta}$ be a stationary point of the optimization problem \eqref{M-esti}. Suppose that the empirical loss function $\loss_m$ satisfies the RSC conditions \eqref{sta-RSC}, and $\gamma_1>\frac{2\mu_1-\mu_2}{2}$.
Assume that the regularization parameter $\lambda$ is chosen to satisfy
\begin{equation}\label{thm1-lambda}
\frac{2}{L}\max\left\{\|\nabla\loss_m(\beta^*)\|_\infty, \gamma_2\sqrt{\frac{\log n}{m}}
\right\}\leq \lambda\leq \frac{\gamma_2\omega}{2rL},
\end{equation}
and the sample size satisfies
\begin{equation}\label{thm1-m}
m\geq \frac{16r^2\max(\tau_1^2,\tau_2^2)}{\gamma_2^2\omega^2}\log n.
\end{equation}
Then we have that
\begin{equation}\label{l2-rate}
\|\tilde{\beta}-\beta^*\|_2^2\leq (\sqrt{57}+7)^2 R_q\left(\frac{2\lambda L}{2\gamma_1-2\mu_1+\mu_2}\right)^{2-q},
\end{equation}
\begin{equation}\label{l1-rate}
\|\tilde{\beta}-\beta^*\|_1\leq 4(2\sqrt{57}+15)R_q\left(\frac{2\lambda L}{2\gamma_1-2\mu_1+\mu_2}\right)^{1-q}.
\end{equation}
\end{Theorem}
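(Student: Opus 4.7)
Set $\delta := \tilde\beta - \beta^*$. Since $\tilde\beta,\beta^* \in \Omega$, the constraint \eqref{g-l1} gives the a priori bound $\|\delta\|_1 \leq 2r/\omega$. The first preparatory step is to reduce to the quadratic regime $\|\delta\|_2 \leq 3$ of the RSC assumption \eqref{sta-RSC1}. I would do this by contradiction: if $\|\delta\|_2 > 3$, then combining the first-order condition \eqref{1st-cond} tested at $\beta^*$ with the linear-regime RSC \eqref{sta-RSC2}, Lemma \ref{lem-regu}, and the bounds \eqref{thm1-lambda}--\eqref{thm1-m}, one derives a lower bound on $\|\delta\|_1$ that exceeds $2r/\omega$, contradicting the feasibility bound above.

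The heart of the proof is to extract the effective curvature $\alpha := \gamma_1 - \mu_1 + \mu_2/2$, which is positive exactly under the hypothesis $\gamma_1 > (2\mu_1-\mu_2)/2$. Starting from $\langle \nabla\loss_m(\tilde\beta) + \nabla\regu_\lambda(\tilde\beta),\delta\rangle \leq 0$ obtained from \eqref{1st-cond} at $\beta^*$, I decompose $\regu_\lambda = \h_\lambda + \Q_\lambda$ and add and subtract $\nabla\loss_m(\beta^*) + \nabla\Q_\lambda(\beta^*)$. The paired gradient difference $\langle \nabla\loss_m(\tilde\beta) - \nabla\loss_m(\beta^*) + \nabla\Q_\lambda(\tilde\beta)-\nabla\Q_\lambda(\beta^*),\delta\rangle$ is then lower-bounded by $(\gamma_1 - \mu_1)\|\delta\|_2^2 - \tau_1(\log n/m)\|\delta\|_1^2$, using \eqref{sta-RSC1} and \eqref{lem-qlambda-11}. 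Independently, \eqref{lem-qlambda-14} applied at $(\beta^*,\tilde\beta)$ yields $-\langle \nabla\Q_\lambda(\beta^*),\delta\rangle \leq \Q_\lambda(\beta^*) - \Q_\lambda(\tilde\beta) - (\mu_2/2)\|\delta\|_2^2$, and migrating the $-(\mu_2/2)\|\delta\|_2^2$ term to the LHS is what upgrades the effective curvature from $\gamma_1-\mu_1$ to $\alpha$. Combining with the subgradient inequality $-\langle \nabla\h_\lambda(\tilde\beta),\delta\rangle \leq \h_\lambda(\beta^*) - \h_\lambda(\tilde\beta)$ (convexity of $\h_\lambda$), and the bound $|\langle \nabla\loss_m(\beta^*),\delta\rangle| \leq (\lambda L/2)\|\delta\|_1$ from \eqref{thm1-lambda}, one gets the intermediate master inequality
\begin{equation*}
\alpha\|\delta\|_2^2 \;\leq\; \tau_1\frac{\log n}{m}\|\delta\|_1^2 + \frac{\lambda L}{2}\|\delta\|_1 + \regu_\lambda(\beta^*) - \regu_\lambda(\tilde\beta).
\end{equation*}

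The rest is the standard soft-sparsity endgame. Applying Lemma \ref{lem-regu} with $S=S_\eta$ (for a parameter $\eta>0$ to be chosen) and $J$ indexing the top-$|S_\eta|$ coordinates of $|\delta|$ upper-bounds the regularizer difference by $\lambda L(\|\delta_J\|_1 - \|\delta_{J^c}\|_1) + 2\lambda L\|\beta^*_{S_\eta^c}\|_1$. The $\tau_1$-term is absorbed via the chain $\tau_1\sqrt{\log n/m} \leq \gamma_2\omega/(4r)$ and $\gamma_2\sqrt{\log n/m}\leq \lambda L/2$ together with $\|\delta\|_1 \leq 2r/\omega$, producing $\tau_1(\log n/m)\|\delta\|_1^2 \leq (\lambda L/4)\|\delta\|_1$. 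Rearranging yields a cone-type inequality $\|\delta_{J^c}\|_1 \leq c_1\|\delta_J\|_1 + c_2\|\beta^*_{S_\eta^c}\|_1$, hence $\|\delta\|_1 \leq c_3\sqrt{|S_\eta|}\|\delta\|_2 + c_4\|\beta^*_{S_\eta^c}\|_1$ (via $\|\delta_J\|_1 \leq \sqrt{|S_\eta|}\|\delta\|_2$), and simultaneously the quadratic $\alpha\|\delta\|_2^2 \leq c_5\lambda L\sqrt{|S_\eta|}\|\delta\|_2 + c_6\lambda L\|\beta^*_{S_\eta^c}\|_1$. Inserting the soft-sparsity estimates $|S_\eta|\leq \eta^{-q}R_q$ and $\|\beta^*_{S_\eta^c}\|_1 \leq \eta^{1-q}R_q$ from \eqref{s-eta}, choosing $\eta$ proportional to $\lambda L/\alpha = 2\lambda L/(2\gamma_1-2\mu_1+\mu_2)$, and solving the resulting quadratic in $\|\delta\|_2$ produces \eqref{l2-rate}; substituting the $\ell_2$ bound back into the cone-derived $\ell_1$ bound yields \eqref{l1-rate}. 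The particular constants $\sqrt{57}+7$ and $2\sqrt{57}+15$ are the outputs of the raw discriminant formula at the optimized $\eta$.

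The main obstacle is the curvature-extraction step in Paragraph 2: a straightforward argument using either only \eqref{lem-qlambda-11} or only the subgradient inequality for the convex-plus-quadratic envelope of $\Q_\lambda$ would yield at best effective curvature $\gamma_1-\mu_1$, which is not guaranteed positive under the hypothesis. The two-sided use of Lemma \ref{lem-qlambda} --- namely \eqref{lem-qlambda-11} applied to the gradient-difference of $\Q_\lambda$ between $\tilde\beta$ and $\beta^*$, paired with \eqref{lem-qlambda-14} as the second-order descent expansion of $\Q_\lambda$ at the anchor $\beta^*$ --- is the key device that produces the sharp denominator $2\gamma_1 - 2\mu_1 + \mu_2$ and makes the stated positivity condition on $\gamma_1$ tight. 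A secondary subtlety is the Case-1/Case-2 reduction in Paragraph 1, which needs both halves of \eqref{thm1-lambda} and \eqref{thm1-m} in tandem.
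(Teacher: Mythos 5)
Your proposal is correct and follows essentially the same route as the paper's proof: the same contradiction argument to force $\|\tilde\beta-\beta^*\|_2\le 3$, the same two-sided use of Lemma \ref{lem-qlambda} (via \eqref{lem-qlambda-11} and \eqref{lem-qlambda-14}) to extract the effective curvature $\gamma_1-\mu_1+\mu_2/2$, and the same $S_\eta$/cone/discriminant endgame with $\eta\asymp\lambda L/(\gamma_1-\mu_1+\mu_2/2)$. The only cosmetic differences are that the paper's first step concludes $\|\tilde\beta-\beta^*\|_2\le 3$ directly (rather than contradicting the $\ell_1$ feasibility bound) and does not need Lemma \ref{lem-regu} there, using only $\|\nabla\regu_\lambda(\tilde\beta)\|_\infty\le\lambda L$.
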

\begin{proof}
Set $\tilde{\delta}:=\tilde{\beta}-\beta^*$. We first show that $\|\tilde{\delta}\|_2\leq 3$. Suppose on the contrary that $\|\tilde{\delta}\|_2>3$. Then one has the following inequality by \eqref{sta-RSC2}:
\begin{equation}\label{thm1-1}
\langle \nabla\loss_m(\tilde{\beta})-\nabla\loss_m(\beta^*),\tilde{\delta} \rangle\geq \gamma_2\|\tilde{\delta}\|_2-\tau_2\sqrt{\frac{\log n}{m}}\|\tilde{\delta}\|_1.
\end{equation}
Noting $\beta^*\in \Omega$, and combining \eqref{thm1-1} and \eqref{1st-cond} (with $\beta^*$ in place of $\beta$), we arrive at
\begin{equation}\label{thm1-2}
\langle -\nabla\regu_\lambda(\tilde{\beta})-\nabla\loss_m(\beta^*),\tilde{\delta} \rangle\geq \gamma_2\|\tilde{\delta}\|_2-\tau_2\sqrt{\frac{\log n}{m}}\|\tilde{\delta}\|_1.
\end{equation}
Applying H{\"o}lder's inequality and the triangle inequality to the left-hand side of \eqref{thm1-2}, and noting that $\regu_\lambda$ satisfies Assumption \ref{asup-regu}, one has by \citet[Lemma 4]{loh2015regularized} and \eqref{thm1-lambda} that
\begin{equation*}
\langle -\nabla\regu_\lambda(\tilde{\beta})-\nabla\loss_m(\beta^*),\tilde{\delta} \rangle
\leq \{\|\nabla\regu_\lambda(\tilde{\beta})\|_\infty+\|\nabla\loss_m(\beta^*)\|_\infty\}\|\tilde{\delta}\|_1\\ \leq \left\{\lambda L+\frac{\lambda L}{2}\right\}\|\tilde{\delta}\|_1.
\end{equation*}
Then combining this inequality with \eqref{thm1-2} and noting that $\|\tilde{\delta}\|_1\leq \|\tilde{\beta}\|_1+\|\beta^*\|_1\leq g(\tilde{\beta})/\omega+g(\beta^*)/\omega\leq 2r/\omega$
(due to \eqref{g-l1}),
we obtain that
\begin{equation*}
\|\tilde{\delta}\|_2\leq \frac{\|\tilde{\delta}\|_1}{\gamma_2}\left(\frac{3\lambda L}{2}+\tau_2\sqrt{\frac{\log n}{m}}\right)\leq \frac{2r}{\gamma_2\omega}\left(\frac{3\lambda L}{2}+\tau_2\sqrt{\frac{\log n}{m}}\right).
\end{equation*}
Since $\lambda$ satisfies \eqref{thm1-lambda} and $m$ satisfies \eqref{thm1-m},
we obtain that $\|\tilde{\delta}\|_2\leq 3$, a contradiction. Thus, $\|\tilde{\delta}\|_2\leq 3$.
Therefore, by \eqref{sta-RSC1}, one has that
\begin{equation}\label{thm1-3}
\langle \nabla\loss_m(\tilde{\beta})-\nabla\loss_m(\beta^*),\tilde{\delta} \rangle\geq \gamma_1\|\tilde{\delta}\|_2^2-\tau_1\frac{\log n}{m}\|\tilde{\delta}\|_1^2.
\end{equation}
On the other hand, it follows from \eqref{lem-qlambda-11} and  \eqref{lem-qlambda-14} in Lemma \ref{lem-qlambda} that
\begin{equation*}
\begin{aligned}
\langle \nabla\regu_\lambda(\tilde{\beta}),\beta^*-\tilde{\beta} \rangle
&= \langle \nabla\Q_\lambda(\tilde{\beta})+\nabla\h_\lambda(\tilde{\beta}),\beta^*-\tilde{\beta} \rangle \\
&\leq \langle \nabla\Q_\lambda(\beta^*),\beta^*-\tilde{\beta} \rangle+\mu_1\|\beta^*-\tilde{\beta}\|_2^2+\langle \nabla\h_\lambda(\tilde{\beta}),\beta^*-\tilde{\beta} \rangle \\
&\leq \Q_\lambda(\beta^*)-\Q_\lambda(\tilde{\beta})+\frac{2\mu_1-\mu_2}{2}\|\beta^*-\tilde{\beta}\|_2^2+\langle \nabla\h_\lambda(\tilde{\beta}),\beta^*-\tilde{\beta} \rangle.
\end{aligned}
\end{equation*}
Moreover, since the function $\h_\lambda$ is convex, one has that
\begin{equation*}
\h_\lambda(\beta^*)-\h_\lambda(\tilde{\beta})\geq \langle \nabla\h_\lambda(\tilde{\beta}),\beta^*-\tilde{\beta} \rangle.
\end{equation*}
This, together with the former inequality, implies that
\begin{equation}\label{thm1-6}
\langle \nabla\regu_\lambda(\tilde{\beta}),\beta^*-\tilde{\beta} \rangle \leq \regu_\lambda(\beta^*)-\regu_\lambda(\tilde{\beta})+\frac{2\mu_1-\mu_2}{2}\|\beta^*-\tilde{\beta}\|_2^2.
\end{equation}
Then combining \eqref{thm1-3}, \eqref{thm1-6} and \eqref{1st-cond} (with $\beta^*$ in place of $\beta$), we obtain that
\begin{equation}\label{thm1-9}
\begin{aligned}
\gamma_1\|\tilde{\delta}\|_2^2-\tau_1\frac{\log n}{m}\|\tilde{\delta}\|_1^2
&\leq -\langle \nabla\loss_m(\beta^*),\tilde{\delta} \rangle+\regu_\lambda(\beta^*)-\regu_\lambda(\tilde{\beta})+\frac{2\mu_1-\mu_2}{2}\|\beta^*-\tilde{\beta}\|_2^2\\
&\leq \|\nabla\loss_m(\beta^*)\|_\infty\|\tilde{\delta}\|_1+\regu_\lambda(\beta^*)-\regu_\lambda(\tilde{\beta})+\frac{2\mu_1-\mu_2}{2}\|\tilde{\delta}\|_2^2.
\end{aligned}
\end{equation}
Let $J$ denote the index set corresponding to the $|S_\eta|$ largest coordinates in
absolute value of $\tilde{\delta}$ (recalling the set $S_\eta$ defined in \eqref{S-eta}. It then follows from Lemma \ref{lem-regu} (with $S=S_\eta$) that
\begin{equation}\label{thm1-8}
\regu_\lambda(\beta^*)-\regu_\lambda(\tilde{\beta})\leq
\lambda L(\|\tilde{\delta}_J\|_1-\|\tilde{\delta}_{J^c}\|_1)+2\lambda L\|\beta^*_{S_\eta^c}\|_1.
\end{equation}
Then combining \eqref{thm1-8} with \eqref{thm1-9} and noting assumption \eqref{thm1-lambda}, one has that
\begin{equation*}
\begin{aligned}
\gamma_1\|\tilde{\delta}\|_2^2-\tau_1\frac{\log n}{m}\|\tilde{\delta}\|_1^2
&\leq \|\nabla\loss_m(\beta^*)\|_\infty\|\tilde{\delta}\|_1+\lambda L(\|\tilde{\delta}_J\|_1-\|\tilde{\delta}_{J^c}\|_1)+2\lambda L\|\beta^*_{S_\eta^c}\|_1+\frac{2\mu_1-\mu_2}{2}\|\tilde{\delta}\|_2^2\\
&\leq \frac{3\lambda L}{2}\|\tilde{\delta}_J\|_1-\frac{\lambda L}{2}\|\tilde{\delta}_{J^c}\|_1+2\lambda L\|\beta^*_{S_\eta^c}\|_1+\frac{2\mu_1-\mu_2}{2}\|\tilde{\delta}\|_2^2.
\end{aligned}
\end{equation*}
This, together with the fact $\|\tilde{\delta}\|_1\leq 2r/\omega$ and assumptions \eqref{thm1-lambda} and \eqref{thm1-m}, implies that
\begin{equation}\label{thm1-15}
\begin{aligned}
2(\gamma_1-\frac{2\mu_1-\mu_2}{2})\|\tilde{\delta}\|_2^2
&\leq 3\lambda L\|\tilde{\delta}_J\|_1-\lambda L\|\tilde{\delta}_{J^c}\|_1+4\tau_1\frac{r}{\omega}\frac{\log n}{m}\|\tilde{\delta}\|_1+2\lambda L\|\beta^*_{S_\eta^c}\|_1\\
&\leq 3\lambda L\|\tilde{\delta}_J\|_1-\lambda L\|\tilde{\delta}_{J^c}\|_1+\gamma_2\sqrt{\frac{\log n}{m}}\|\tilde{\delta}\|_1+2\lambda L\|\beta^*_{S_\eta^c}\|_1\\
&\leq \frac{7\lambda L}{2}\|\tilde{\delta}_J\|_1-\frac{\lambda L}{2}\|\tilde{\delta}_{J^c}\|_1+2\lambda L\|\beta^*_{S_\eta^c}\|_1.
\end{aligned}
\end{equation}
Since $\gamma_1>\frac{2\mu_1-\mu_2}{2}$ by assumption, we have by \eqref{thm1-15} that $\|\tilde{\delta}_{J^c}\|_1\leq 7\|\tilde{\delta}_J\|_1+4\|\beta^*_{S_\eta^c}\|_1$. Consequently,
\begin{equation}\label{thm1-12}
\|\tilde{\delta}\|_1\leq
8\|\tilde{\delta}_J\|_1+4\|\beta^*_{S_\eta^c}\|_1\leq 8\sqrt{|S_\eta|}\|\tilde{\delta}_J\|_2+4\|\beta^*_{S_\eta^c}\|_1
\leq 8\sqrt{|S_\eta|}\|\tilde{\delta}\|_2+4\|\beta^*_{S_\eta^c}\|_1.
\end{equation}
Furthermore, \eqref{thm1-15} implies that
\begin{equation}\label{thm1-13}
2\left(\gamma_1-\frac{2\mu_1-\mu_2}{2}\right)\|\tilde{\delta}\|_2^2 \leq \frac{7\lambda L}{2}\|\tilde{\delta}_J\|_1+2\lambda L\|\beta^*_{S_\eta^c}\|_1
\leq 28\lambda L\sqrt{|S_\eta|}\|\tilde{\delta}\|_2+16\lambda L\|\beta^*_{S_\eta^c}\|_1.
\end{equation}
Combining \eqref{s-eta} with \eqref{thm1-13} and setting $\eta=\frac{\lambda L}{\gamma_1-\frac{2\mu_1-\mu_2}{2}}$, we obtain \eqref{l2-rate}.
Moreover, it follows from \eqref{thm1-12} that \eqref{l1-rate} holds. The proof is complete.
\end{proof}
\begin{Remark}
{\rm (i)} Theorem \ref{thm-sta} tells us that the $\ell_2$ recovery bound for all the stationary points of the nonconvex optimization problem \eqref{M-esti} scales as $\|\tilde{\beta}-\beta^*\|_2^2=O(\lambda^{2-q}R_q)$. When $\lambda$ is chosen as $\lambda=\Omega\left(\sqrt{\frac{\log n}{m}}\right)$, the $\ell_2$ recovery bound implies that the estimator $\tilde{\beta}$ is statistically consistent. Note that this result is independent of any  specific algorithms, meaning that any numerical algorithm for solving the nonconvex optimization problem \eqref{M-esti} can stably recover the true sparse parameter as long as it is guaranteed to converge to a stationary point.

{\rm (ii)} In the case when $q=0$, the underlying parameter $\beta^*$ is of exact sparsity with $\|\beta^*\|_0=R_0$, and Theorem \ref{thm-sta} is reduced to \citet[Theorem 1]{loh2015regularized} up to constant factors. More generally, Theorem \ref{thm-sta} provides the $\ell_2$ recovery bound when $\beta^*\in \B_q(R_q)$ with $q\in [0,1]$. Note that the sparsity of $\beta^*$ is measured via the $\ell_q$-norm, with larger values meaning lesser sparsity, \eqref{l2-rate} indicates that the rate of the recovery bound slows down as $q$ increases to 1.
\end{Remark}

\subsection{Algorithmic results}

We now apply the proximal gradient method \citep{nesterov2007gradient} to solve a modified version of the nonconvex optimization problem \eqref{M-esti} and then establish the linear convergence rate under the RSC/RSM conditions. Recall that the regularizer can be decomposed as $\regu_\lambda(\cdot)=\Q_\lambda(\cdot)+\h_\lambda(\cdot)$ by Assumption \ref{asup-regu}. In the following, we shall consider the side constraint function specialized as
\begin{equation*}
g(\cdot):=\frac{1}{\lambda}\h_\lambda(\cdot),
\end{equation*}
which is convex by Assumption \ref{asup-regu} and satisfies $g(\beta)\geq L\|\beta\|_1$ for all $\beta\in \R^n$ by Lemma \ref{lem-hlambda}, meeting our requirement \eqref{g-l1} with $\omega=L$. The optimization problem \eqref{M-esti} now can be written as
\begin{equation}\label{M-esti-alg}
\hat{\beta} \in \argmin_{\beta\in \R^n, g(\beta)\leq r}\{\bar{\loss}_m(\beta)+\h_\lambda(\beta)\},
\end{equation}
with $\bar{\loss}_m(\cdot):=\loss_m(\cdot)+\Q_\lambda(\cdot)$. By means of this decomposition, the objective function is decomposed into a differentiable but possibly nonconvex function and a possibly nonsmooth but convex function. Applying the proximal gradient method proposed in \cite{nesterov2007gradient} to \eqref{M-esti-alg}, we obtain a sequence of iterates $\{\beta^t\}_{t=0}^\infty$ as
\begin{equation}\label{algo-pga}
\beta^{t+1}\in \argmin_{\beta\in \R^n, g(\beta)\leq r}\left\{\frac{1}{2}\Big{\|}\beta-\left(\beta^t-\frac{\nabla{\bar{\loss}_m(\beta^t)}}{v}\right)\Big{\|}_2^2+\frac{1}{v}\h_\lambda(\beta)\right\},
\end{equation}
where $\frac{1}{v}$ is the step size.

Given $\beta^t$, one can follow \cite{loh2015regularized} to generate the next iterate $\beta^{t+1}$ via the following three steps; see \citet[Appendix C.1]{loh2015regularized} for details.
\begin{enumerate}[(1)]
\item First optimize the unconstrained optimization problem
\begin{equation*}
\hat{\beta^t}\in \argmin_{\beta\in \R^n}\left\{\frac{1}{2}\Big{\|}\beta-\left(\beta^t-\frac{\nabla{\bar{\loss}_m(\beta^t)}}{v}\right)\Big{\|}_2^2+\frac{1}{v}\h_\lambda(\beta)\right\}.
\end{equation*}
\item If $g(\hat{\beta^t})\leq r$, define $\beta^{t+1}=\hat{\beta^t}$.
\item Otherwise, if $g(\hat{\beta^t})> r$, optimize the constrained optimization problem
\begin{equation*}
\beta^{t+1}\in \argmin_{\beta\in \R^n, g(\beta)\leq r}\left\{\frac{1}{2}\Big{\|}\beta-\left(\beta^t-\frac{\nabla{\bar{\loss}_m(\beta^t)}}{v}\right)\Big{\|}_2^2\right\}.
\end{equation*}
\end{enumerate}
\indent For specific regularizers such as SCAD and MCP, one could consider two different decompositions for the regularizer as we did in Example \ref{decomp}. Particularly, if we use the first decomposition,
then $\h_\lambda$ is a piecewise function (cf. \eqref{SCAD-h-1}, \eqref{MCP-h-1}, and \eqref{HQ-lambda}), and implementing iteration \eqref{algo-pga} may require large computational cost. However, if we use the second decomposition,
then \eqref{algo-pga} turns to
\begin{equation}\label{algo-pga-1}
\beta^{t+1}\in \argmin_{\beta\in \R^n, \|\beta\|_1\leq r}\left\{\frac{1}{2}\Big{\|}\beta-\left(\beta^t-\frac{\nabla{\bar{\loss}_m(\beta^t)}}{v}\right)\Big{\|}_2^2+\frac{\lambda}{v}\|\beta\|_1\right\},
\end{equation}
corresponding to first performing the soft-threshold operator and then performing $\ell_2$ projection onto the $\ell_1$-ball of radius $r$, which can be computed rapidly in $\mathcal{O}(n)$ time using a procedure proposed in \cite{Duchi2008Efficient}. The advantage of iteration \eqref{algo-pga-1} is due to the more general condition (vi) in Assumption \ref{asup-regu}. We shall further compare these two decompositions by simulations in section \ref{sec-simul}.

Before we state our main result that the algorithm defined by \eqref{algo-pga} converges linearly to a small neighbourhood of any global solution $\hat{\beta}$, we shall need some notations to simplify our expositions.
Let $\phi(\cdot):=\loss_m(\cdot)+\regu_\lambda(\cdot)=\bar{\loss}_m(\cdot)+\h_\lambda(\cdot)$ denote the optimization objective function. The Taylor error $\bar{\T}(\beta,\beta')$ for the modified loss function $\bar{\loss}_m$ is defined as follows:
\begin{equation}\label{taylor-modified}
\bar{\T}(\beta,\beta')=\T(\beta,\beta')+\Q_\lambda(\beta)-\Q_\lambda(\beta')-\langle \nabla\Q_\lambda(\beta'),\beta-\beta' \rangle.
\end{equation}
Recall the RSC and RSM conditions in \eqref{alg-RSC} and \eqref{alg-RSM}, respectively. Throughout this section, we assume $2\gamma_i>\mu_1$ for all $i=3,4,5$, and set $\gamma:=\min\{\gamma_3,\gamma_4\}$ and $\tau:=\max\{\tau_3,\tau_4,\tau_5\}$. Recall that the true underlying parameter $\beta^*\in \B_q(R_q)$ (cf. \eqref{lq-ball}).  Let $\hat{\beta}$ be a global solution of the optimization problem \eqref{M-esti}. Then unless otherwise specified, we define
\begin{align}
&\bar{\epsilon}_{\text{stat}}:=8R_q^{\frac12}\left(\frac{\log n}{m}\right)^{-\frac{q}{4}}\left(\|\hat{\beta}-\beta^*\|_2+R_q^{\frac12}\left(\frac{\log n}{m}\right)^{\frac{1}{2}-\frac{q}{4}}\right),\label{bar-epsilon}\\
&\kappa:= \left\{1-\frac{2\gamma-\mu_1}{8v}+\frac{256R_q\tau\left(\frac{\log n}{m}\right)^{1-\frac{q}2}}{2\gamma-\mu_1}\right\}\left\{1-\frac{256R_q\tau\left(\frac{\log n}{m}\right)^{1-\frac{q}2}}{2\gamma-\mu_1}\right\}^{-1},\label{lem-bound-kappa}\\
&\xi:= 2\tau\frac{\log n}{m}\left\{\frac{2\gamma-\mu_1}{8v}+\frac{512R_q\tau\left(\frac{\log n}{m}\right)^{1-\frac{q}2}}{2\gamma-\mu_1}+5\right\}\left\{1-\frac{256R_q\tau\left(\frac{\log n}{m}\right)^{1-\frac{q}2}}{2\gamma-\mu_1}\right\}^{-1}.\label{lem-bound-xi}
\end{align}
For a given number $\Delta>0$ and an integer $T>0$ such that
\begin{equation}\label{lem-cone-De1}
\phi(\beta^t)-\phi(\hat{\beta})\leq \Delta, \quad \forall\ t\geq T,
\end{equation}
define
\begin{equation*}
\epsilon(\Delta):=\frac{2}{L}\min\left(\frac{\Delta}{\lambda},r\right).
\end{equation*}
With this setup, we now state our main algorithmic result.
\begin{Theorem}\label{thm-algo}
Let $R_q>0$ and $r>0$ be positive numbers such that $\beta^*\in \B_q(R_q)\cap \Omega$. Let $\hat{\beta}$ be a global solution of the optimization problem \eqref{M-esti}.
Suppose that the empirical loss function $\loss_m$ satisfies the RSC/RSM conditions \eqref{alg-RSC} and \eqref{alg-RSM}, and $\gamma>\frac{1}{2}\mu_1$. Let $\{\beta^t\}_{t=0}^\infty$ be a sequence of iterates generated via \eqref{algo-pga} with an initial point $\beta^0$ satisfying $\|\beta^0-\hat{\beta}\|_2\leq 3$ and step size $v\geq \max\{2\gamma_5-\mu_2,\mu_1\}$.
Assume that the regularization paramter $\lambda$ is chosen to satisfy
\begin{equation}\label{thm2-lambda}
\frac{4}{L}\max\left\{\|\nabla\loss_m(\beta^*)\|_\infty,
\tau\sqrt{\frac{\log n}{m}}\right\}\leq \lambda\leq \frac{6\gamma-9\mu_1}{4r},
\end{equation}
and the sample size satisfies
\begin{equation}\label{thm2-m}
m\geq \max\left\{\frac{4r^2}{L^2}, \left(\frac{128R_q\tau}{2\gamma-\mu_1}\right)^{1-\frac{q}{2}}\right\}\log n.
\end{equation}
Then for any tolerance $\Delta^*\geq\frac{8\xi}{1-\kappa}\bar{\epsilon}_{\emph{stat}}^2$ and any iteration $t\geq T(\Delta^*)$, we have that
\begin{equation}\label{thm2-error}
\|\beta^t-\hat{\beta}\|_2^2\leq \left(\frac{4}{2\gamma-\mu_1}\right)\left(\Delta^*+\frac{{\Delta^*}^2}{2\tau}+4\tau\frac{\log n}{m}\bar{\epsilon}_{\emph{stat}}^2\right),
\end{equation}
where
\begin{equation*}
\begin{aligned}
T(\Delta^*)&:=\log_2\log_2\left(\frac{r\lambda}{\Delta^*}\right)\left(1+\frac{\log 2}{\log(1/\kappa)}\right) +\frac{\log((\phi(\beta^0)-\phi(\hat{\beta}))/\Delta^*)}{\log(1/\kappa)},
\end{aligned}
\end{equation*}
and $\bar{\epsilon}_{\emph{stat}}$, $\kappa$, $\xi$ are defined in
\eqref{bar-epsilon}-\eqref{lem-bound-xi}, respectively.
\end{Theorem}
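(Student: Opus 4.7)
The plan is to adapt the restricted-strong-convexity-based proximal gradient analysis of \citet{agarwal2012fast} and \citet{loh2015regularized} to the soft-sparsity regime $\beta^* \in \B_q(R_q)$, while also handling the modified loss $\bar{\loss}_m = \loss_m + \Q_\lambda$ (which is only weakly restricted strongly convex, since $\Q_\lambda$ is concave with curvature $\mu_1$). I would organise the argument in four stages, with the middle two being where the bulk of the work lies.

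First, I would prove a cone-type bound on the iterate error $\nu^t := \beta^t - \hat{\beta}$: whenever $\phi(\beta^t) - \phi(\hat{\beta}) \leq \Delta$, it holds that
\begin{equation*}
\|\nu^t\|_1 \leq 8\sqrt{|S_\eta|}\,\|\nu^t\|_2 + 4\|\beta^*_{S_\eta^c}\|_1 + \epsilon(\Delta),
\end{equation*}
with $\eta = \lambda L/(\gamma-\mu_1/2)$ and $\epsilon(\Delta) = \frac{2}{L}\min(\Delta/\lambda,r)$. This mimics \eqref{thm1-12}, but the extra $\epsilon(\Delta)$ term arises because $\hat{\beta}$ is now only an approximate minimiser within the current sublevel set. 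The estimate uses Lemma \ref{lem-regu} applied with $S=S_\eta$, the sparsity facts \eqref{s-eta}, the feasibility $g(\beta^t),g(\hat{\beta}) \leq r$, and the decomposition $\regu_\lambda = \h_\lambda + \Q_\lambda$ combined with \eqref{lem-qlambda-13}. Squaring and absorbing constants will yield a bound of the form $\|\nu^t\|_1^2 \lesssim R_q(\log n/m)^{-q/2}\|\nu^t\|_2^2 + \bar{\epsilon}_{\text{stat}}^2 + \epsilon(\Delta)^2$, which is the lever that converts $\ell_1$-error into $\ell_2$-error inside the RSC slack term.

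Next, I would establish a one-step descent recursion. Using the RSM inequality \eqref{alg-RSM} for $\loss_m$ together with \eqref{lem-qlambda-13} for $\Q_\lambda$ yields a quadratic upper bound $\bar{\loss}_m(\beta^{t+1}) \leq \bar{\loss}_m(\beta^t) + \langle \nabla\bar{\loss}_m(\beta^t), \beta^{t+1}-\beta^t\rangle + \frac{v}{2}\|\beta^{t+1}-\beta^t\|_2^2$ valid for $v \geq \max\{2\gamma_5 - \mu_2,\mu_1\}$. Combining this with the first-order optimality of the proximal subproblem \eqref{algo-pga}, evaluated at the feasible point $\hat{\beta}$, gives a three-point inequality
\begin{equation*}
\phi(\beta^{t+1}) - \phi(\hat{\beta}) \leq \tfrac{v}{2}\|\nu^t\|_2^2 - \tfrac{v}{2}\|\nu^{t+1}\|_2^2 + \text{RSC slack}.
\end{equation*}
The RSC slack, controlled by \eqref{alg-RSC1} and the Lemma \ref{lem-qlambda} estimates, involves $\tau(\log n/m)\|\nu^{t+1}\|_1^2$; substituting the Stage 1 cone bound turns this into $\kappa\,\|\nu^{t+1}\|_2^2 + O(\bar{\epsilon}_{\text{stat}}^2)$ with the explicit $\kappa, \xi$ of \eqref{lem-bound-kappa}-\eqref{lem-bound-xi}. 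Rearranging produces the geometric recursion $a_{t+1} \leq \kappa a_t + \xi \bar{\epsilon}_{\text{stat}}^2$ for $a_t := \phi(\beta^t) - \phi(\hat{\beta})$, whose solution is $a_t \leq \kappa^t a_0 + \xi \bar{\epsilon}_{\text{stat}}^2/(1-\kappa)$. The sample-size lower bound \eqref{thm2-m} is exactly what keeps $\kappa<1$ and the denominator in \eqref{lem-bound-kappa} positive.

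Finally, I would run a doubling bootstrap to sharpen the constants and obtain the $\log_2\log_2$ factor in $T(\Delta^*)$: once $a_t \leq \Delta_k$ is reached after epoch $k$, I recycle $\epsilon(\Delta_k)$ into the Stage 1 cone bound, which yields a smaller effective slack and hence allows the gap to halve to $\Delta_{k+1} = \Delta_k/2$ in roughly $\log(1/\kappa)^{-1}$ additional iterations, starting from the coarse bound $\Delta_0 \asymp r\lambda$. Summing over $\log_2\log_2(r\lambda/\Delta^*)$ epochs gives the stated $T(\Delta^*)$. To close, once $a_t \leq \Delta^*$, I apply the RSC inequality \eqref{alg-RSC1} at the pair $(\beta^t,\hat{\beta})$ combined with convexity of $\h_\lambda$ and \eqref{lem-qlambda-14} to extract $\tfrac{2\gamma-\mu_1}{4}\|\nu^t\|_2^2 \leq \Delta^* + \tau(\log n/m)\|\nu^t\|_1^2$, and then re-invoke the Stage 1 bound to land on the explicit form \eqref{thm2-error}.

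The main obstacle will be Stage 3: the coupling between the cone constant (which depends on $\epsilon(\Delta)$ and hence on the current gap) and the contraction factor $\kappa$ is subtle, and I have to thread the doubling argument so that each epoch's improvement genuinely halves the gap while keeping the accumulated $\bar{\epsilon}_{\text{stat}}^2$ error term controlled by the \emph{final} target $\Delta^* \geq 8\xi\bar{\epsilon}_{\text{stat}}^2/(1-\kappa)$. Tracking the constants carefully — so that \eqref{thm2-m} suffices and the tolerance threshold in \eqref{thm2-error} matches the statement — is where the bookkeeping is heaviest. The presence of the weak concave piece $\Q_\lambda$, which introduces the $-\mu_1/2$ correction in the effective curvature, must be propagated consistently through every application of RSC.
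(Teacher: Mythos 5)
Your overall architecture (cone bound for the iterate error, one-step contraction of the optimality gap, epoch bootstrap, final extraction of the $\ell_2$ bound via RSC) is the same as the paper's, but two of your steps, as described, would fail. First, in Stage 2 the plain three-point inequality obtained by testing the optimality of the proximal subproblem against the feasible point $\hat{\beta}$ gives $\phi(\beta^{t+1})-\phi(\hat{\beta})\leq \tfrac{v}{2}\|\nu^t\|_2^2-\tfrac{v}{2}\|\nu^{t+1}\|_2^2+\text{slack}$; converting $\tfrac{v}{2}\|\nu^t\|_2^2$ into the gap via the RSC lower bound yields a factor $\tfrac{2v}{2\gamma-\mu_1}$, which exceeds $1$ because the admissible step sizes satisfy $v\geq 2\gamma_5-\mu_2$, so no contraction $\kappa<1$ results. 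The paper instead tests the subproblem objective $\phi_t$ against the interpolated point $\beta_a=a\hat{\beta}+(1-a)\beta^t$ with $a=\frac{2\gamma-\mu_1}{4v}$, which is exactly where the term $\frac{2\gamma-\mu_1}{8v}$ in the definition \eqref{lem-bound-kappa} of $\kappa$ comes from; without this interpolation your recursion $a_{t+1}\leq\kappa a_t+\xi\bar{\epsilon}_{\text{stat}}^2$ cannot be derived. Second, your Stage 3 claims the gap ``halves'' per epoch, $\Delta_{k+1}=\Delta_k/2$, yet you assert a $\log_2\log_2(r\lambda/\Delta^*)$ epoch count; halving would require $\log_2(r\lambda/\Delta^*)$ epochs. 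The double logarithm arises because the new tolerance is set to $\Delta_{k+1}=\frac{4\xi}{1-\kappa}(\bar{\epsilon}_{\text{stat}}^2+\epsilon_k^2)$ with $\epsilon_k=\frac{2}{L}\min(\Delta_k/\lambda,r)$, so the feedback is quadratic, $\Delta_{k+1}\lesssim\Delta_k^2/(\lambda^2\cdot\text{const})$, producing the doubly-exponential decay $\Delta_{k+1}\leq\Delta_k/4^{2^{k+1}}$ that the paper invokes.

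Two further points. You never establish that $\|\beta^t-\hat{\beta}\|_2\leq 3$ for all $t$ (the paper's Lemma \ref{lem-radius}, proved by induction using the RSC/RSM conditions and the three-point inequality); this is needed before the quadratic form \eqref{alg-RSC1} of RSC can be applied anywhere in your Stages 1--4, and the hypothesis $\|\beta^0-\hat{\beta}\|_2\leq 3$ exists precisely to seed that induction. Also, your Stage 1 cone bound is stated directly for $\nu^t=\beta^t-\hat{\beta}$ with the compressibility term $\|\beta^*_{S_\eta^c}\|_1$, but Lemma \ref{lem-regu} controls differences from a vector whose tail $\|\beta_{S^c}\|_1$ is small, i.e.\ from $\beta^*$, not from $\hat{\beta}$; the paper derives the cone inequality separately for $\hat{\beta}-\beta^*$ and $\beta^t-\beta^*$ and combines them by the triangle inequality (which is why $\|\hat{\beta}-\beta^*\|_2$ appears inside $\bar{\epsilon}_{\text{stat}}$), and uses the threshold $\eta=\sqrt{\log n/m}$ rather than your $\lambda L/(\gamma-\mu_1/2)$ to match the exponents in \eqref{bar-epsilon}. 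These are repairable, but as written the proposal does not yield the stated $\kappa$ or $T(\Delta^*)$.
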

\begin{Remark}
{\rm (i)} Note that in the case when $q=0$, the underlying parameter $\beta^*$ is exact sparse with $\|\beta^*\|_0=R_0$, and Theorem \ref{thm-algo} is reduced to \citet[Theorem 2]{loh2015regularized} up to constant factors.\\
{\rm (ii)}  Generally speaking, Theorem \ref{thm-algo} has established the linear convergence rate when $\beta^*\in \B_q(R_q)$ with  $q\in [0,1]$ and pointed out some significant differences between the case of exact sparsity and that of soft sparsity. Specifically, it is ensured that the algorithm in \citet[Theorem 2]{loh2015regularized} converges linearly to a small neighbourhood of the global solution $\hat{\beta}$ and the optimization error only depends on the statistical recovery bound $\|\hat{\beta}-\beta^*\|_2$. In contrast, besides the statistical error $\|\hat{\beta}-\beta^*\|_2$, our optimization error \eqref{thm2-error}
in the case when $q\in (0,1]$ also has an additional term $R_q\left(\frac{\log n}{m}\right)^{1-\frac q2}$ (cf. \eqref{bar-epsilon}), which appears because of the statistical nonidentifiability over the $\ell_q$-ball, and it is no larger than $\|\hat{\beta}-\beta^*\|_2$
with overwhelming probability.
\end{Remark}

Before providing the proof of Theorem \ref{thm-algo}, we give several useful lemmas with the corresponding proofs deferred to Appendix.
\begin{Lemma}\label{lem-radius}
Under the conditions of Theorem \ref{thm-algo}, it holds that for all $t\geq 0$
\begin{equation}\label{lem-radius-1}
\|\beta^t-\hat{\beta}\|_2\leq 3.
\end{equation}
\end{Lemma}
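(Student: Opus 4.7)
I would proceed by induction on $t$. The base case $\|\beta^0-\hat{\beta}\|_2\leq 3$ is given. For the inductive step, I would assume $\|\beta^s-\hat{\beta}\|_2\leq 3$ for every $s\leq t$ and deduce the same bound for $s=t+1$ by combining a near-descent of $\phi$ along the proximal trajectory with an RSC-induced coercivity that makes $\|\beta^{t+1}-\hat{\beta}\|_2>3$ force $\phi(\beta^{t+1})-\phi(\hat{\beta})$ to exceed its inductively available upper bound, yielding a contradiction.

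\textbf{Near-descent from the proximal step.} First I would combine RSM \eqref{alg-RSM} on $\loss_m$ with Lemma \ref{lem-qlambda}\eqref{lem-qlambda-14} on $\Q_\lambda$ to obtain
\begin{equation*}
\bar{\T}(\beta^{t+1},\beta^t)\leq\tfrac{2\gamma_5-\mu_2}{2}\|\beta^{t+1}-\beta^t\|_2^2+\tau_5\tfrac{\log n}{m}\|\beta^{t+1}-\beta^t\|_1^2.
\end{equation*}
The hypothesis $v\geq 2\gamma_5-\mu_2$ makes the quadratic term built into \eqref{algo-pga} dominate the RSM contribution, so the optimality of $\beta^{t+1}$ (tested against the feasible competitor $\beta^t$) gives
\begin{equation*}
\phi(\beta^{t+1})\leq\phi(\beta^t)+\tau_5\tfrac{\log n}{m}\|\beta^{t+1}-\beta^t\|_1^2.
\end{equation*}
Feasibility combined with Lemma \ref{lem-hlambda} forces $\|\beta^{t+1}-\beta^t\|_1\leq 2r/L$, so the per-step slack is $O(r^2(\log n)/(L^2m))$; by the sample-size hypothesis \eqref{thm2-m}, this keeps $\phi(\beta^{t+1})-\phi(\hat{\beta})$ inside a uniform window around $\phi(\beta^0)-\phi(\hat{\beta})$, and the latter is itself bounded by a single use of the RSM inequality at $(\beta^0,\hat{\beta})$ exploiting $\|\beta^0-\hat{\beta}\|_2\leq 3$.

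\textbf{Coercivity via RSC.} Next I would suppose, toward a contradiction, that $\|\beta^{t+1}-\hat{\beta}\|_2>3$. Then the far-field RSC \eqref{alg-RSC2}, Lemma \ref{lem-qlambda}\eqref{lem-qlambda-13} on $\Q_\lambda$, convexity of $\h_\lambda$, and the first-order condition \eqref{1st-cond} at $\hat{\beta}$ tested against the feasible $\beta^{t+1}$ give
\begin{equation*}
\phi(\beta^{t+1})-\phi(\hat{\beta})\geq\gamma_4\|\beta^{t+1}-\hat{\beta}\|_2-\tau_4\sqrt{\tfrac{\log n}{m}}\|\beta^{t+1}-\hat{\beta}\|_1-\tfrac{\mu_1}{2}\|\beta^{t+1}-\hat{\beta}\|_2^2.
\end{equation*}
Using the feasibility envelopes $\|\beta^{t+1}-\hat{\beta}\|_1\leq 2r/L$ and $\|\beta^{t+1}-\hat{\beta}\|_2^2\leq(2r/L)\|\beta^{t+1}-\hat{\beta}\|_2$ linearizes the right-hand side to $(\gamma_4-\mu_1 r/L)\|\beta^{t+1}-\hat{\beta}\|_2-2\tau_4(r/L)\sqrt{\log n/m}$. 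The upper bound $\lambda r\leq(6\gamma-9\mu_1)/4$ in \eqref{thm2-lambda}, together with $\gamma>\mu_1/2$, keeps the leading coefficient bounded away from zero, while the residual $\tau_4$-term is absorbed by the lower bound $\lambda\geq(4\tau/L)\sqrt{\log n/m}$; hence $\phi(\beta^{t+1})-\phi(\hat{\beta})\geq c\|\beta^{t+1}-\hat{\beta}\|_2>3c$ for some explicit positive $c$.

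\textbf{Closing the induction and main obstacle.} Matching this linear-in-$\|\beta^{t+1}-\hat{\beta}\|_2$ lower bound against the uniform upper bound from the near-descent step forces $\|\beta^{t+1}-\hat{\beta}\|_2\leq 3$, completing the induction. The main technical obstacle is the constant bookkeeping needed to guarantee that the coefficient $\gamma_4-\mu_1 r/L$ remains strictly positive: the $\mu_1$-induced $\ell_2^2$ penalty has to be absorbed using the bounded $\ell_1$-diameter $2r/L$ of the feasible region, and this is precisely what the explicit upper bound on $\lambda r$ in \eqref{thm2-lambda} delivers. The auxiliary hypothesis $v\geq\mu_1$ plays a complementary role in keeping the effective curvature of the proximal subproblem positive once the $\Q_\lambda$-contribution (which can reduce strong convexity by up to $\mu_1$) is accounted for.
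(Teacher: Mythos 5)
Your skeleton (induction, contradiction against $\|\beta^{t+1}-\hat{\beta}\|_2>3$, and the far-field RSC coercivity bound $\phi(\beta^{t+1})-\phi(\hat{\beta})\geq \gamma_4\|\beta^{t+1}-\hat{\beta}\|_2-\tau_4\sqrt{\log n/m}\,\|\beta^{t+1}-\hat{\beta}\|_1-\tfrac{\mu_1}{2}\|\beta^{t+1}-\hat{\beta}\|_2^2$) matches the paper, but the other half of your argument has two genuine gaps. First, your ``near-descent'' step only yields the one-step inequality $\phi(\beta^{t+1})\leq\phi(\beta^t)+\tau\tfrac{\log n}{m}\|\beta^{t+1}-\beta^t\|_1^2$ with a strictly positive additive slack of order $r^2\tau\log n/(L^2m)$; telescoping this over $t$ iterations gives a bound that grows linearly in $t$, so it does \emph{not} keep $\phi(\beta^{t+1})-\phi(\hat{\beta})$ in a ``uniform window'' around $\phi(\beta^0)-\phi(\hat{\beta})$. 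The sample-size condition \eqref{thm2-m} controls each summand, not the sum. The paper avoids accumulation entirely: it tests the optimality of the proximal subproblem against $\hat{\beta}$ (not against $\beta^t$), obtaining the variational inequality $\langle \nabla\bar{\loss}_m(\beta^k)+v(\beta^{k+1}-\beta^k)+\nabla\h_\lambda(\beta^{k+1}),\beta^{k+1}-\hat{\beta}\rangle\leq 0$, and combines it with RSC at the pair $(\hat{\beta},\beta^k)$ (legitimate by the induction hypothesis $\|\beta^k-\hat{\beta}\|_2\leq 3$) and RSM at $(\beta^{k+1},\beta^k)$ to get the single-step bound $\phi(\beta^{k+1})-\phi(\hat{\beta})\leq \tfrac{v}{2}\|\beta^k-\hat{\beta}\|_2^2-\tfrac{v}{2}\|\beta^{k+1}-\hat{\beta}\|_2^2+8\tfrac{r^2\tau}{L^2}\tfrac{\log n}{m}$, which depends only on the current iterate.

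Second, your absorption of the quadratic penalty $-\tfrac{\mu_1}{2}\|\beta^{t+1}-\hat{\beta}\|_2^2$ via $\|\cdot\|_2^2\leq(2r/L)\|\cdot\|_2$ produces the leading coefficient $\gamma_4-\mu_1 r/L$, and positivity of this quantity does not follow from the stated hypotheses: condition \eqref{thm2-lambda} bounds the product $\lambda r$, not $r$ itself, so nothing prevents $\mu_1 r/L$ from exceeding $\gamma_4$. The hypothesis $v\geq\mu_1$, which you mention only in passing, is precisely what resolves this in the paper: the term $-\tfrac{v}{2}\|\beta^{k+1}-\hat{\beta}\|_2^2$ from the upper bound above cancels the $-\tfrac{\mu_1}{2}\|\beta^{k+1}-\hat{\beta}\|_2^2$ from the coercivity lower bound with room to spare, and the leftover $-\tfrac{v-\mu_1}{2}\|\beta^{k+1}-\hat{\beta}\|_2^2$ is then linearized \emph{with a favorable sign} using the contradiction hypothesis $\|\beta^{k+1}-\hat{\beta}\|_2^2>3\|\beta^{k+1}-\hat{\beta}\|_2$, yielding $(\gamma+\tfrac{3(v-\mu_1)}{2})\|\beta^{k+1}-\hat{\beta}\|_2\leq\tfrac{9v}{2}+(\text{small terms})\leq 3(\gamma+\tfrac{3(v-\mu_1)}{2})$ and hence the contradiction. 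You would need to rework both halves along these lines for the induction to close.
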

\begin{Lemma}\label{lem-cone}
Suppose that the conditions of Theorem \ref{thm-algo} are satisfied, and
that there exists a pair $(\Delta, T)$ such that \eqref{lem-cone-De1} holds.
Then for any iteration $t\geq T$, it holds that
\begin{equation*}
\begin{aligned}
\|\beta^t-\hat{\beta}\|_1&\leq 4R_q^{\frac12}\left(\frac{\log n}{m}\right)^{-\frac{q}{4}}\|\beta^t-\hat{\beta}\|_2+\bar{\epsilon}_{\emph{stat}}+\epsilon(\Delta).
\end{aligned}
\end{equation*}
\end{Lemma}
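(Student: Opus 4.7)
The plan is to adapt the cone-bound argument from the proof of Theorem~\ref{thm-sta}, with $\beta^t$ playing the role of $\tilde{\beta}$ and the function-value slack $\Delta$ appearing as an additive perturbation. The key bridge is that $\hat{\beta}$ is a global minimizer of $\phi$ on $\Omega$ and $\beta^*\in\Omega$, so \eqref{lem-cone-De1} yields $\phi(\beta^t)\leq\phi(\hat{\beta})+\Delta\leq\phi(\beta^*)+\Delta$; this lets us exploit the soft sparsity of $\beta^*$ via \eqref{s-eta}. Concretely, I set $\tilde{\delta}:=\beta^t-\beta^*$, fix $\eta:=\sqrt{\log n/m}$, and let $J$ index the $|S_\eta|$ largest entries of $\tilde{\delta}$ in absolute value, so that $|J|\leq R_q(\log n/m)^{-q/2}$ and $\|\beta^*_{S_\eta^c}\|_1\leq R_q(\log n/m)^{(1-q)/2}$ by \eqref{s-eta}.

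Writing $\phi(\beta^t)-\phi(\beta^*)\leq\Delta$ via the Taylor identity $\loss_m(\beta^t)-\loss_m(\beta^*)=\T(\beta^t,\beta^*)+\langle\nabla\loss_m(\beta^*),\tilde{\delta}\rangle$, then applying the RSC condition \eqref{alg-RSC1} to $\T$ (which is valid since $\|\tilde{\delta}\|_2\leq\|\beta^t-\hat{\beta}\|_2+\|\hat{\beta}-\beta^*\|_2\leq 3$, by Lemma~\ref{lem-radius} combined with Theorem~\ref{thm-sta} in the regime \eqref{thm2-lambda}--\eqref{thm2-m}), H\"older's inequality with the bound $\|\nabla\loss_m(\beta^*)\|_\infty\leq\lambda L/4$ drawn from \eqref{thm2-lambda}, and Lemma~\ref{lem-regu} applied to $\regu_\lambda(\beta^*)-\regu_\lambda(\beta^t)$ with $\beta=\beta^*$, $S=S_\eta$, $\delta=\tilde{\delta}$, I would obtain
\[
\gamma_3\|\tilde{\delta}\|_2^2-\tau_3\tfrac{\log n}{m}\|\tilde{\delta}\|_1^2\leq\Delta+\tfrac{\lambda L}{4}\|\tilde{\delta}\|_1+\lambda L\bigl(\|\tilde{\delta}_J\|_1-\|\tilde{\delta}_{J^c}\|_1\bigr)+2\lambda L\|\beta^*_{S_\eta^c}\|_1.
\]
The feasibility cap $\|\tilde{\delta}\|_1\leq 2r/L$ combined with the sample-size bound \eqref{thm2-m} absorbs the quadratic $\ell_1$ term into a contribution of order $(\lambda L/4)\|\tilde{\delta}\|_1$; dropping the nonnegative $\|\tilde{\delta}\|_2^2$ term and rearranging then produces the cone inequality
\[
\|\tilde{\delta}_{J^c}\|_1\leq 3\|\tilde{\delta}_J\|_1+4\|\beta^*_{S_\eta^c}\|_1+\tfrac{2\Delta}{\lambda L}.
\]

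Combining this with $\|\tilde{\delta}_J\|_1\leq\sqrt{|J|}\,\|\tilde{\delta}\|_2\leq R_q^{1/2}(\log n/m)^{-q/4}\|\tilde{\delta}\|_2$ yields
\[
\|\tilde{\delta}\|_1\leq 4R_q^{1/2}(\log n/m)^{-q/4}\|\tilde{\delta}\|_2+4R_q(\log n/m)^{(1-q)/2}+\tfrac{2\Delta}{\lambda L}.
\]
The final step applies the triangle inequalities $\|\tilde{\delta}\|_2\leq\|\beta^t-\hat{\beta}\|_2+\|\hat{\beta}-\beta^*\|_2$ and $\|\beta^t-\hat{\beta}\|_1\leq\|\tilde{\delta}\|_1+\|\hat{\beta}-\beta^*\|_1$, and then invokes the intermediate inequality \eqref{thm1-12} specialized to $\hat{\beta}$ (itself a stationary point) to bound $\|\hat{\beta}-\beta^*\|_1$ in terms of $\|\hat{\beta}-\beta^*\|_2$ and $R_q(\log n/m)^{(1-q)/2}$. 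All the resulting statistical pieces collapse into $\bar{\epsilon}_{\mathrm{stat}}$ via its definition \eqref{bar-epsilon}, while the $\Delta$-dependent slack becomes $\epsilon(\Delta)=(2/L)\min(\Delta/\lambda,r)$, the minimum with $r$ simply recording the trivial a priori cap $\|\tilde{\delta}\|_1\leq 2r/L$ which renders any larger slack redundant.

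The main obstacle I anticipate is the careful tracking of numerical constants, so that the coefficient of $\|\beta^t-\hat{\beta}\|_2$ comes out to exactly $4$ and the inflated constant inside $\bar{\epsilon}_{\mathrm{stat}}$ is controlled by $8$; this crucially exploits the tighter regularization bound $\|\nabla\loss_m(\beta^*)\|_\infty\leq\lambda L/4$ in \eqref{thm2-lambda} (versus $\lambda L/2$ in \eqref{thm1-lambda}), which sharpens the cone constant above from $7$ down to $3$. A secondary subtlety is justifying that RSC \eqref{alg-RSC1} is applicable to $\tilde{\delta}$: since Lemma~\ref{lem-radius} directly controls only $\|\beta^t-\hat{\beta}\|_2$, one must either appeal to Theorem~\ref{thm-sta} to show $\|\hat{\beta}-\beta^*\|_2$ is small enough under \eqref{thm2-lambda}--\eqref{thm2-m} or insert a preliminary contradiction step in the style of the proof of Theorem~\ref{thm-sta} to force $\|\tilde{\delta}\|_2\leq 3$ before invoking the local form of RSC.
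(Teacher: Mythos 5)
Your skeleton is the same as the paper's: bound $\phi(\beta^t)-\phi(\beta^*)\leq\Delta$ via \eqref{lem-cone-De1} and global optimality of $\hat{\beta}$, expand with the Taylor error, invoke RSC, H\"older with $\|\nabla\loss_m(\beta^*)\|_\infty\leq\lambda L/4$, and Lemma~\ref{lem-regu} with $S=S_\eta$, arrive at the cone inequality $\|\tilde{\delta}_{J^c}\|_1\leq 3\|\tilde{\delta}_J\|_1+4\|\beta^*_{S_\eta^c}\|_1+2\Delta/(\lambda L)$, set $\eta=\sqrt{\log n/m}$, and finish by the triangle inequality. The constants you track ($3$, $4$, $8$) are exactly the paper's. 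However, two of your justifications lean on Theorem~\ref{thm-sta}, whose hypotheses are \emph{not} among those of Theorem~\ref{thm-algo}: Theorem~\ref{thm-sta} assumes the statistical RSC conditions \eqref{sta-RSC}, the condition $\gamma_1>\frac{2\mu_1-\mu_2}{2}$, and the parameter choices \eqref{thm1-lambda}--\eqref{thm1-m}, whereas Theorem~\ref{thm-algo} assumes \eqref{alg-RSC}, \eqref{alg-RSM} and \eqref{thm2-lambda}--\eqref{thm2-m}. So neither the appeal to Theorem~\ref{thm-sta} to certify $\|\tilde{\delta}\|_2\leq 3$ nor the use of \eqref{thm1-12} to bound $\|\hat{\beta}-\beta^*\|_1$ is licensed as written.

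Both issues have simple repairs, which is precisely how the paper proceeds. First, you never need $\|\tilde{\delta}\|_2\leq 3$: split into the two RSC regimes. If $\|\tilde{\delta}\|_2\leq 3$ use \eqref{alg-RSC1}; if $\|\tilde{\delta}\|_2>3$ use \eqref{alg-RSC2}. In either case the RSC lower bound, after absorbing the $\tau$-terms through $\|\tilde{\delta}\|_1\leq 2r/L$ together with \eqref{thm2-lambda} and \eqref{thm2-m} (which give $\lambda L\geq 8\frac{r\tau}{L}\frac{\log n}{m}$ and $\lambda\geq\frac{4\tau}{L}\sqrt{\log n/m}$), is nonnegative up to a $\frac{\lambda L}{4}\|\tilde{\delta}\|_1$ slack, which is all the cone derivation requires; no contradiction step is needed. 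Second, to control $\|\hat{\beta}-\beta^*\|_1$, apply the \emph{same} intermediate bound \eqref{lem-cone-1} to $\beta=\hat{\beta}$, which satisfies $\phi(\hat{\beta})-\phi(\beta^*)\leq 0\leq\Delta$ since $\hat{\beta}$ is a global minimizer and $\beta^*\in\Omega$; this yields $\|\hat{\beta}-\beta^*\|_1\leq 4R_q^{1/2}(\log n/m)^{-q/4}\|\hat{\beta}-\beta^*\|_2+4R_q(\log n/m)^{\frac12-\frac{q}{2}}$ with no $\Delta$-term, and the two contributions then assemble into $\bar{\epsilon}_{\mathrm{stat}}$ exactly as in \eqref{bar-epsilon}. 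With these two substitutions your argument coincides with the paper's proof.
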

\begin{Lemma}\label{lem-Tphi-bound}
Suppose that the conditions of Theorem \ref{thm-algo} are satisfied and that there exists a pair $(\Delta, T)$ such that \eqref{lem-cone-De1} holds.
Then for any iteration $t\geq T$, we have that
\begin{align}
\bar{\T}(\hat{\beta},\beta^t)&\geq -2\tau\frac{\log n}{m}(\bar{\epsilon}_{\emph{stat}}^2+\epsilon^2(\Delta))^2,\label{lem-bound-T1}\\
\phi(\beta^t)-\phi(\hat{\beta})&\geq \left(\frac{2\gamma-\mu_1}{4}\right)\|\beta^t-\hat{\beta}\|_2^2-2\tau\frac{\log n}{m}(\bar{\epsilon}_{\emph{stat}}^2+\epsilon^2(\Delta))^2, \label{lem-bound-T2}\\
\phi(\beta^t)-\phi(\hat{\beta})&\leq \kappa^{t-T}(\phi(\beta^T)-\phi(\hat{\beta}))+\frac{2\xi}{1-\kappa}(\bar{\epsilon}_{\emph{stat}}^2+\epsilon^2(\Delta)).\label{lem-bound-0}
\end{align}
\end{Lemma}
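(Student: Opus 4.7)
The plan is to prove the three inequalities in sequence, each building on the previous one together with Lemmas \ref{lem-radius}, \ref{lem-cone}, and \ref{lem-qlambda}.

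For \eqref{lem-bound-T1}, I would start from the definition \eqref{taylor-modified} of $\bar{\T}$ and apply the RSC condition \eqref{alg-RSC1}, which is legal because Lemma \ref{lem-radius} guarantees $\|\hat\beta-\beta^t\|_2\leq 3$. Combining this with the lower bound \eqref{lem-qlambda-13} on the gap $\Q_\lambda(\hat\beta)-\Q_\lambda(\beta^t)-\langle\nabla\Q_\lambda(\beta^t),\hat\beta-\beta^t\rangle$ yields
$\bar{\T}(\hat\beta,\beta^t)\geq(\gamma_3-\tfrac{\mu_1}{2})\|\hat\beta-\beta^t\|_2^{2}-\tau\tfrac{\log n}{m}\|\hat\beta-\beta^t\|_1^{2}$. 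I would then square the $\ell_1$-bound from Lemma \ref{lem-cone} via $(a+b)^2\leq 2a^2+2b^2$, which splits $\|\hat\beta-\beta^t\|_1^2$ into a multiple of $\|\hat\beta-\beta^t\|_2^2$ (with factor $\sim R_q(\log n/m)^{-q/2}$) plus $\sim(\bar\epsilon_\text{stat}^2+\epsilon^2(\Delta))$. The sample-size condition \eqref{thm2-m} is precisely calibrated so that $32R_q\tau(\log n/m)^{1-q/2}\leq(2\gamma-\mu_1)/4$, making the net coefficient of $\|\hat\beta-\beta^t\|_2^2$ nonnegative and leaving the residual term that is the right-hand side of \eqref{lem-bound-T1}.

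For \eqref{lem-bound-T2}, I would write $\phi(\beta^t)-\phi(\hat\beta)=\bar{\T}(\beta^t,\hat\beta)+\langle\nabla\bar{\loss}_m(\hat\beta),\beta^t-\hat\beta\rangle+\h_\lambda(\beta^t)-\h_\lambda(\hat\beta)$. The first-order necessary condition at the global minimizer $\hat\beta$ of \eqref{M-esti-alg}, together with convexity of $\h_\lambda$, shows that $\langle\nabla\bar{\loss}_m(\hat\beta),\beta^t-\hat\beta\rangle+\h_\lambda(\beta^t)-\h_\lambda(\hat\beta)\geq 0$, so $\phi(\beta^t)-\phi(\hat\beta)\geq\bar{\T}(\beta^t,\hat\beta)$. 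Applying the very same chain of RSC $+$ Lemma \ref{lem-qlambda} $+$ Lemma \ref{lem-cone} $+$ \eqref{thm2-m} estimates to $\bar{\T}(\beta^t,\hat\beta)$ as in the previous paragraph, but this time keeping the $\|\beta^t-\hat\beta\|_2^2$ term explicitly with coefficient $(2\gamma-\mu_1)/4$, yields \eqref{lem-bound-T2}.

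The main work is in \eqref{lem-bound-0}, which encodes the linear convergence itself. I would first establish a one-step descent inequality. Starting from the optimality of $\beta^{t+1}$ in \eqref{algo-pga} and comparing with the feasible convex combination $(1-\alpha)\beta^t+\alpha\hat\beta$ (feasibility from convexity of $g$), then using convexity of $\h_\lambda$ and the Taylor identity $\langle\nabla\bar{\loss}_m(\beta^t),\hat\beta-\beta^t\rangle=\bar{\loss}_m(\hat\beta)-\bar{\loss}_m(\beta^t)-\bar{\T}(\hat\beta,\beta^t)$, and bounding $\bar{\T}(\beta^{t+1},\beta^t)\leq\tfrac{v}{2}\|\beta^{t+1}-\beta^t\|_2^2+\tau\tfrac{\log n}{m}\|\beta^{t+1}-\beta^t\|_1^2$ via RSM \eqref{alg-RSM} combined with \eqref{lem-qlambda-14} (using $v\geq 2\gamma_5-\mu_2$), I would arrive at
\begin{equation*}
\phi(\beta^{t+1})-\phi(\hat\beta)\leq(1-\alpha)(\phi(\beta^t)-\phi(\hat\beta))-\alpha\bar{\T}(\hat\beta,\beta^t)+\tau\tfrac{\log n}{m}\|\beta^{t+1}-\beta^t\|_1^2+\tfrac{v\alpha^2}{2}\|\beta^t-\hat\beta\|_2^2.
\end{equation*}
Then I would dispose of $-\alpha\bar{\T}(\hat\beta,\beta^t)$ via \eqref{lem-bound-T1}, split $\|\beta^{t+1}-\beta^t\|_1^2\leq 2\|\beta^{t+1}-\hat\beta\|_1^2+2\|\beta^t-\hat\beta\|_1^2$ and square-expand each through Lemma \ref{lem-cone}, and finally convert the resulting $\|\beta^t-\hat\beta\|_2^2$ (and $\|\beta^{t+1}-\hat\beta\|_2^2$) terms into multiples of $\phi(\beta^t)-\phi(\hat\beta)$ (resp.\ $\phi(\beta^{t+1})-\phi(\hat\beta)$) by \eqref{lem-bound-T2}. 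With the choice $\alpha=(2\gamma-\mu_1)/(8v)$, the sample-size condition \eqref{thm2-m} again ensures the cross-coefficients absorb into the leading ones, producing the recursion $\phi(\beta^{t+1})-\phi(\hat\beta)\leq\kappa[\phi(\beta^t)-\phi(\hat\beta)]+\xi(\bar{\epsilon}_\text{stat}^2+\epsilon^2(\Delta))$ with exactly the $\kappa,\xi$ of \eqref{lem-bound-kappa}, \eqref{lem-bound-xi}. Iterating this from $t=T$ and summing the resulting geometric series yields \eqref{lem-bound-0}.

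The hard part is the bookkeeping in step \eqref{lem-bound-0}: several competing error terms ($\|\beta^{t+1}-\beta^t\|_1^2$, $\|\beta^t-\hat\beta\|_2^2$, $\|\beta^{t+1}-\hat\beta\|_2^2$, $-\alpha\bar{\T}(\hat\beta,\beta^t)$) must be traded against each other in such a way that the leading coefficient on $\phi(\beta^t)-\phi(\hat\beta)$ is pushed below $1$, while keeping the residual term bounded by $\xi(\bar{\epsilon}_\text{stat}^2+\epsilon^2(\Delta))$. This is exactly where the sample-size lower bound \eqref{thm2-m} and the choice of step size $v$ are tight, and where care is needed to track constants.
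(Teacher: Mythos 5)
Your proposal follows essentially the same route as the paper's proof: RSC plus \eqref{lem-qlambda-13} and Lemma \ref{lem-cone} with the sample-size condition for \eqref{lem-bound-T1}; the first-order optimality of $\hat\beta$ and convexity of $\h_\lambda$ reducing \eqref{lem-bound-T2} to the same estimate; and for \eqref{lem-bound-0} the comparison of $\beta^{t+1}$ with the convex combination $a\hat\beta+(1-a)\beta^t$, the RSM bound, the $\ell_1$-splitting through Lemma \ref{lem-cone}, and conversion of the $\ell_2$ terms via \eqref{lem-bound-T2} to get the contraction, exactly as in the paper. The one constant to adjust: to land on the $\kappa$ of \eqref{lem-bound-kappa} the paper takes $a=(2\gamma-\mu_1)/(4v)$, for which the quadratic penalty $\frac{va^2}{2}\|\beta^t-\hat\beta\|_2^2$ contributes $a/2$ back to the coefficient of $\Delta_t$ and leaves $1-\frac{2\gamma-\mu_1}{8v}$; your choice $a=(2\gamma-\mu_1)/(8v)$ would yield a slightly larger (weaker) contraction factor.
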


By virtue of the above lemmas, we are now ready to prove Theorem \ref{thm-algo}.
\begin{proof}(Proof of Theorem \ref{thm-algo})
We first prove the inequality as follows:
\begin{equation}\label{thm2-1}
\phi(\beta^t)-\phi(\hat{\beta})\leq \Delta^*,\quad \forall t\geq T(\Delta^*).
\end{equation}
Divide the iterations $t=0,1,\cdots$ into a series of disjoint epochs $[T_k,T_{k+1}]$ and define an associated sequence of tolerances $\Delta_0>\Delta_1>\cdots$ such that
\begin{equation*}
\phi(\beta^t)-\phi(\hat{\beta})\leq \Delta_k,\quad \forall t\geq T_k,
\end{equation*}
as well as the associated error term $\epsilon_k:=\frac{2}{L}\min \left\{\frac{\Delta_k}{\lambda},r\right\}$. The values of $\{(\Delta_k,T_k)\}_{k\geq 1}$ will be chosen later.
Then at the first iteration,
we apply Lemma \ref{lem-Tphi-bound} (cf. \eqref{lem-bound-0}) with $\epsilon_0=2r/L$ and $T_0=0$
to conclude that
\begin{equation}\label{thm2-T0}
\phi(\beta^t)-\phi(\hat{\beta})\leq \kappa^t(\phi(\beta^0)-\phi(\hat{\beta}))+\frac{2\xi}{1-\kappa}(\bar{\epsilon}_{\text{stat}}^2 +\frac{4r^2}{L^2}),\quad \forall t\geq T_0.
\end{equation}
Set $\Delta_1:=\frac{4\xi}{1-\kappa}(\bar{\epsilon}_{\text{stat}}^2 +\frac{4r^2}{L^2})$. Noting that $\kappa\in (0,1)$ by assumption, one has by \eqref{thm2-T0} that for $T_1:=\lceil \frac{\log (2\Delta_0/\Delta_1)}{\log (1/\kappa)} \rceil$,
\begin{equation*}
\begin{aligned}
\phi(\beta^t)-\phi(\hat{\beta})&\leq \frac{\Delta_1}{2}+\frac{2\xi}{1-\kappa}\left(\bar{\epsilon}_{\text{stat}}^2 +\frac{4r^2}{L^2}\right)=\Delta_1\leq \frac{8\xi}{1-\kappa}\max\left\{\bar{\epsilon}_{\text{stat}}^2,\frac{4r^2}{L^2}\right\},\quad \forall t\geq T_1.
\end{aligned}
\end{equation*}
For $k\geq 1$, we define
\begin{equation}\label{thm2-DeltaT}
\Delta_{k+1}:= \frac{4\xi}{1-\kappa}(\bar{\epsilon}_{\text{stat}}^2+\epsilon_k^2)
\quad \mbox{and}\quad
T_{k+1}:= \left\lceil \frac{\log (2\Delta_k/\Delta_{k+1})}{\log (1/\kappa)}+T_k \right\rceil.
\end{equation}
Then Lemma \ref{lem-Tphi-bound} (cf. \eqref{lem-bound-0}) is applicable to concluding that for all $t\geq T_k$,
\begin{equation*}
\phi(\beta^t)-\phi(\hat{\beta})\leq \kappa^{t-T_k}(\phi(\beta^{T_k})-\phi(\hat{\beta}))+\frac{2\xi}{1-\kappa}(\bar{\epsilon}_{\text{stat}}^2+\epsilon_k^2),
\end{equation*}
which implies that
\begin{equation*}
\phi(\beta^t)-\phi(\hat{\beta})\leq \Delta_{k+1}\leq \frac{8\xi}{1-\kappa}\max\{\bar{\epsilon}_{\text{stat}}^2,\epsilon_k^2\},\quad \forall t\geq T_{k+1}.
\end{equation*}
From \eqref{thm2-DeltaT}, we obtain the following recursion for $\{(\Delta_k,T_k)\}_{k=0}^\infty$
\begin{subequations}
\begin{align}
\Delta_{k+1}&\leq \frac{8\xi}{1-\kappa}\max\{\epsilon_k^2,\bar{\epsilon}_{\text{stat}}^2\},\label{thm2-recur-Delta}\\
T_k&\leq k+\frac{\log (2^k\Delta_0/\Delta_k)}{\log (1/\kappa)}.\label{thm2-recur-T}
\end{align}
\end{subequations}
Then by \cite[Section 7.2]{agarwal2012supplementaryMF}, one sees that \eqref{thm2-recur-Delta} implies that
\begin{equation}\label{thm2-recur2}
\Delta_{k+1}\leq \frac{\Delta_k}{4^{2^{k+1}}}\quad \mbox{and}\quad \frac{\Delta_{k+1}}{\lambda}\leq \frac{r}{4^{2^k}},\quad \forall k\geq 1.
\end{equation}
Now let us show how to decide the smallest $k$ such that $\Delta_k\leq \Delta^*$ by applying \eqref{thm2-recur2}. If we are in the first epoch, \eqref{thm2-1} is clearly from \eqref{thm2-recur-Delta}.
Otherwise, by \eqref{thm2-recur-T}, we see that $\Delta_k\leq \Delta^*$ holds after at most $$k(\Delta^*)\geq \frac{\log(\log(r\lambda/\Delta^*)/\log 4)}{\log(2)}+1=\log_2\log_2(r\lambda/\Delta^*)$$ epoches.
Combining the above bound on $k(\Delta^*)$ with \eqref{thm2-recur-T}, we conclude
that $\phi(\beta^t)-\phi(\hat{\beta})\leq \Delta^*$ holds for all iterations
\begin{equation*}
t\geq
\log_2\log_2\left(\frac{r\lambda}{\Delta^*}\right)\left(1+\frac{\log 2}{\log(1/\kappa)}\right)+\frac{\log(\Delta_0/\Delta^*)}{\log(1/\kappa)},
\end{equation*}
which proves \eqref{thm2-1}.
Finally, as \eqref{thm2-1} is proved, it follows from \eqref{lem-bound-T2} in Lemma \ref{lem-Tphi-bound}
and assumption \eqref{thm2-m} that, for any $t\geq T(\Delta^*)$,
\begin{equation*}
\left(\frac{2\gamma-\mu_1}{4}\right)\|\beta^t-\hat{\beta}\|_2^2
\leq \phi(\beta^t)-\phi(\hat{\beta}) +2\tau\frac{\log n}{m}\left(\epsilon(\Delta^*)+\bar{\epsilon}_{\text{stat}}\right)^2
\leq \Delta^*+2\tau\frac{\log n}{m}\left(\frac{2\Delta^*}{\lambda L}+\bar{\epsilon}_{\text{stat}}\right)^2.
\end{equation*}
Consequently,
by assumption \eqref{thm2-lambda}, we conclude that for any $t\geq T(\Delta^*)$,
\begin{equation*}
\|\beta^t-\hat{\beta}\|_2^2\leq \left(\frac{4}{2\gamma-\mu_1}\right)\left(\Delta^*+\frac{{\Delta^*}^2}{2\tau}+4\tau\frac{\log n}{m}\bar{\epsilon}_{\text{stat}}^2\right).
\end{equation*}
The proof is complete.
\end{proof}

\section{Simulations on the corrupted linear regression model}\label{sec-simul}

In this section, we carry out several numerical experiments to illustrate our theoretical results and compare
the performance of the estimators obtained by two different decompositions for the regularizer. Specifically, we consider high-dimensional linear regression with corrupted observations. Recall the standard linear regression model
\begin{equation}\label{ordi-linear}
y_i=\langle \beta^*,X_{i\cdot} \rangle+e_i,\quad \text{for}\ i=1,2\cdots,m,
\end{equation}
where $\beta^*\in \R^n$ is the unknown parameter and $\{(X_{i\cdot},y_i)\}_{i=1}^m$ are i.i.d. observations, which are assumed to be fully-observed in standard formulations. However, this assumption is not realistic for many applications, in which the covariates may be observed only partially and one can only observe the pairs $\{(Z_{i\cdot},y_i)\}_{i=1}^m$ instead, where $Z_{i\cdot}$'s are corrupted versions of the corresponding $X_{i\cdot}$'s. As has been discussed in \cite{loh2012high, loh2015regularized}, there are mainly two types of corruption:
\begin{enumerate}[(a)]
\item Additive noise: For each $i=1,2,\cdots,m$, we observe $Z_{i\cdot}=X_{i\cdot}+W_{i\cdot}$, where $W_{i\cdot}\in \R^n$ is a random vector independent of $X_{i\cdot}$ with mean \textbf{0} and known covariance matrix $\Sigma_w$.
\item Missing data: For each $i=1,2,\cdots,m$, we observe a random vector $Z_{i\cdot}\in \R^n$, such that for each $j=1,2\cdots,n$, we independently observe $Z_{ij}=X_{ij}$ with probability $1-\vartheta$, and $Z_{ij}=0$ with probability $\vartheta$, where $\vartheta\in [0,1)$.
\end{enumerate}

Following a line of past works \citep{loh2012high, loh2015regularized}, we fix $i\in \{1,2,\cdots,m\}$ and use $\Sigma_x$ to denote the covariance matrix of the covariates $X_{i\cdot}$ (i.e., $\Sigma_x=\text{cov}(X_{i\cdot})$). The population loss function is
$\loss(\beta) =\frac12\beta^\top\Sigma_x\beta-{\beta^*}^\top\Sigma_x\beta$.
Let $(\hat{\Gamma},\hat{\Upsilon})$ denote the estimators for $(\Sigma_x,\Sigma_x\beta^*)$ that depend only on the observed data $\{(Z_{i\cdot},y_i)\}_{i=1}^m$, and the empirical loss function is then written as
\begin{equation}\label{empi-corr}
\loss_m(\beta)=\frac12\beta^\top\hat{\Gamma}\beta-\hat{\Upsilon}^\top\beta.
\end{equation}
Substituting the empirical loss function \eqref{empi-corr} into \eqref{M-esti}, and recalling the side constraint \eqref{g-l1} as well as the feasible region $\Omega$ \eqref{feasible}, the following optimization problem is used to estimate $\beta^*$ in the corrupted linear regression
\begin{equation*}
\hat{\beta} \in \argmin_{\beta\in \Omega}\left\{\left(\frac12\beta^\top\hat{\Gamma}\beta-\hat{\Upsilon}^\top\beta\right)+\regu_\lambda(\beta)\right\}.
\end{equation*}

As discussed in \cite{loh2012high}, an appropriate choice of the surrogate pair $(\hat{\Gamma},\hat{\Upsilon})$ for the additive noise and missing data cases is given respectively by
\begin{equation*}
\begin{aligned}
\hat{\Gamma}_{\text{add}}&:= \frac{Z^\top Z}{m}-\Sigma_w \quad \mbox{and}\quad \hat{\Upsilon}_{\text{add}}:=\frac{Z^\top y}{m}, \\
\hat{\Gamma}_{\text{mis}}&:= \frac{\tilde{Z}^\top \tilde{Z}}{m}-\vartheta\cdot \text{diag}\left(\frac{\tilde{Z}^\top \tilde{Z}}{m}\right) \quad \mbox{and}\quad \hat{\Upsilon}_{\text{mis}}:=\frac{\tilde{Z}^\top y}{m}\quad \left(\tilde{Z}=\frac{Z}{1-v}\right).
\end{aligned}
\end{equation*}

The following simulations will be performed with the loss function $\loss_m$ corresponding to linear regression with additive noise and missing data, respectively, and three regularizers, namely the Lasso, SCAD and MCP. All numerical experiments are performed in MATLAB R2014b and executed on a personal desktop (Intel Core i7-4790, 3.60 GHz, 8.00 GB of RAM).

The numerical data are generated as follows. We first generate i.i.d. samples $X_{i\cdot}\sim N(0,\mathbb{I}_n)$ and the noise term $e\sim N(0,(0.1)^2\mathbb{I}_m)$. Then the true parameter $\beta^*$ is generated as a compressible signal whose entries are all nonzeros but obey a power low decay. Specifically, the signal $\beta^*$ is generated by taking a fixed sequence
$\{5.0\times i^{-2}:i=1,2,\cdots,n\}$, multiplying the sequence by a random sign sequence and permuting at random finally. The data $y$ are generated according to \eqref{ordi-linear}. The corrupted term is set to $W_{i\cdot}\sim N(0,(0.2)^2\mathbb{I}_n)$ and $\vartheta=0.2$ for the additive noise and missing data cases, respectively. The problem size $n$ and $m$ will be specified based on the experiments. The data are then generated at random for 100 times. The performance of the $M$-estimator $\hat{\beta}$ is characterized by the relative error $\|\hat{\beta}-\beta^*\|_2/\|\beta^*\|_2$ and is illustrated by averaging across the 100 numerical results.

As we have mentioned in the preceding sections, there are two different decompositions for the SCAD and MCP regularizers, respectively, which result in two specific forms for \eqref{algo-pga} as follows:
\begin{align}
\beta^{t+1}&\in \argmin_{\beta\in \R^n, \frac{1}{\lambda}\h_\lambda(\beta)\leq r}\left\{\frac{1}{2}\Big{\|}\beta-\left(\beta^t-\frac{\nabla{\bar{\loss}_m(\beta^t)}}{v}\right)\Big{\|}_2^2+\frac{1}{v}\h_\lambda(\beta)\right\},\label{simu-pga-1}\\
\beta^{t+1}&\in \argmin_{\beta\in \R^n, \|\beta\|_1\leq r}\left\{\frac{1}{2}\Big{\|}\beta-\left(\beta^t-\frac{\nabla{\bar{\loss}_m(\beta^t)}}{v}\right)\Big{\|}_2^2+\frac{\lambda}{v}\|\beta\|_1\right\},\label{simu-pga-2}
\end{align}
where $\bar{\loss}_m(\cdot)=\loss_m(\cdot)+\Q_\lambda(\cdot)$ with $\Q_\lambda(\cdot)$ and $\h_\lambda(\cdot)$ specified in \eqref{SCAD-h-1}, \eqref{MCP-h-1}, and \eqref{HQ-lambda} for \eqref{simu-pga-1}, and $\Q_\lambda(\cdot)$ given by \eqref{SCAD-q-2}, \eqref{MCP-q-2} and \eqref{HQ-lambda} for \eqref{simu-pga-2}. In the following, we use SCAD$\_1$ and SCAD$\_2$ to denote the estimators obtained by iterations \eqref{simu-pga-1} and \eqref{simu-pga-2} with SCAD as the regularizer, respectively, and use MCP$\_1$ and MCP$\_2$ to denote the estimators produced by iterations \eqref{simu-pga-1} and \eqref{simu-pga-2} with MCP as the regularizer, respectively. Note that for the Lasso regularizer, these two iterations become the same one, and we use Lasso to stand for the estimator produced by either \eqref{simu-pga-1} or \eqref{simu-pga-2} with $\regu_\lambda(\cdot)=\lambda\|\cdot\|_1$.
For all simulations, the regularization parameter is set to $\lambda=\sqrt{\frac{\log n}{m}}$, $r=\frac{1.1}{\lambda}\h_\lambda(\beta^*)$ for \eqref{simu-pga-1} and $r=1.1\|\beta^*\|_1$ for \eqref{simu-pga-2}, to ensure the feasibility of $\beta^*$. Both \eqref{simu-pga-1} and \eqref{simu-pga-2} are implemented with the step size $\frac1{v}=\frac{1}{2\lambda_{\max}(\Sigma_x)}$ and the initial point $\beta^0=\textbf{0}$. We choose the parameter $a=3.7$ for SCAD and $b=1.5$ for MCP.

The first experiment is performed to demonstrate the statistical guarantee for corrupted linear regression in additive noise and missing data cases with Lasso, SCAD and MCP as regularizers, respectively. For the sake of simplification, we here only report results obtained by iteration \eqref{simu-pga-2} though iteration \eqref{simu-pga-1} is also applicable. Fig. \ref{f-stat} plots the relative error versus the rescaled sample size $\frac{m}{\log n}$ for three different vector dimensions $n\in \{256,512,1024\}$. The estimators Lasso, SCAD$\_2$ and MCP$\_2$ are represented by solid, dotted and dashed lines, respectively. We can see from Fig. \ref{f-stat} that the three curves corresponding to the same regularizer in each panel (a) and (b) nearly match with one another under different problem dimensions $n$,
coinciding with Theorem \ref{thm-sta}. Moreover, as the sample size increases, the relative error decreases to zero, implying the statistical consistency of the estimators.
\begin{figure}[htbp]
\centering
\subfigure[]{
\includegraphics[width=0.49\textwidth]{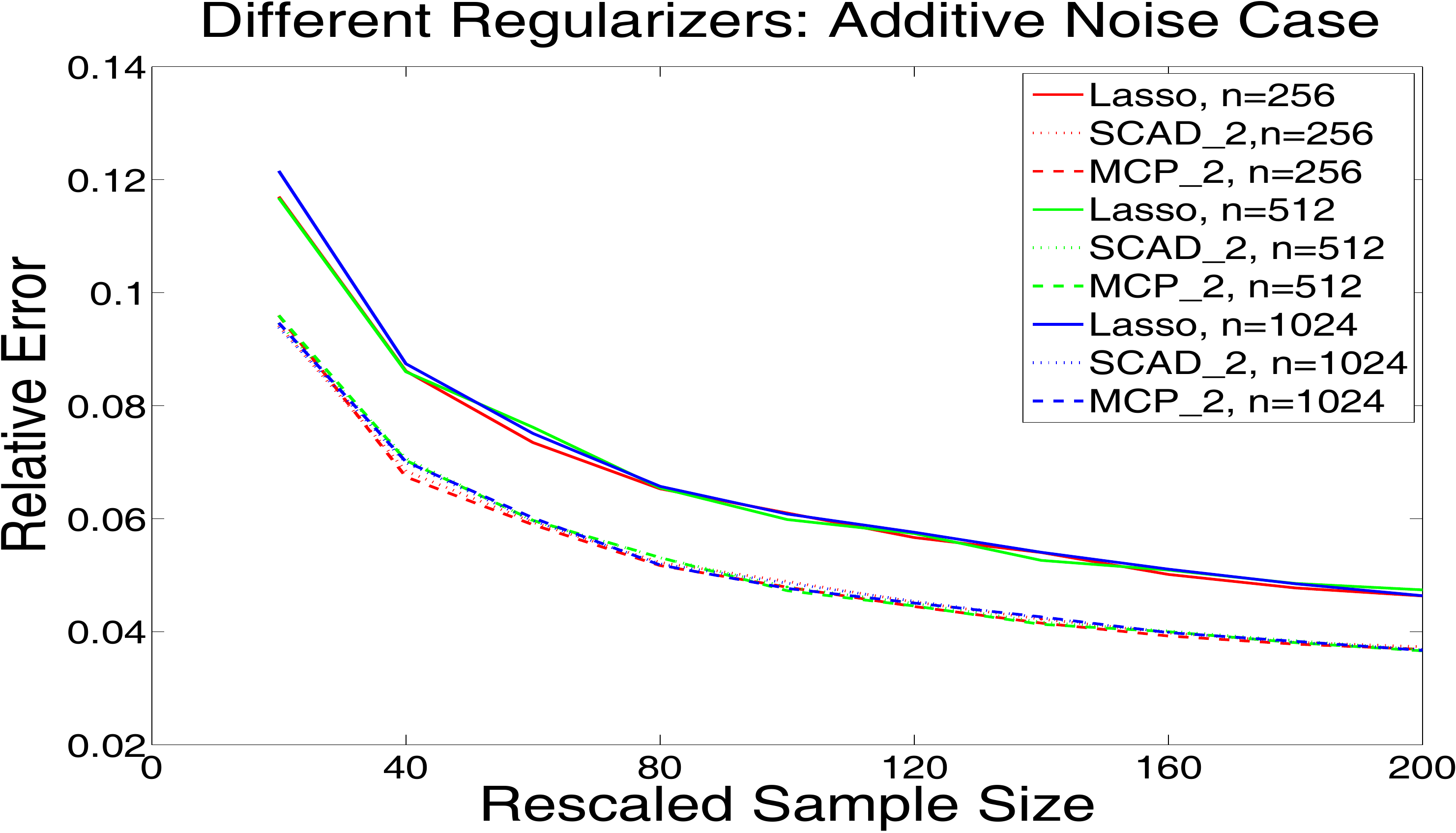}}
\subfigure[]{
\includegraphics[width=0.49\textwidth]{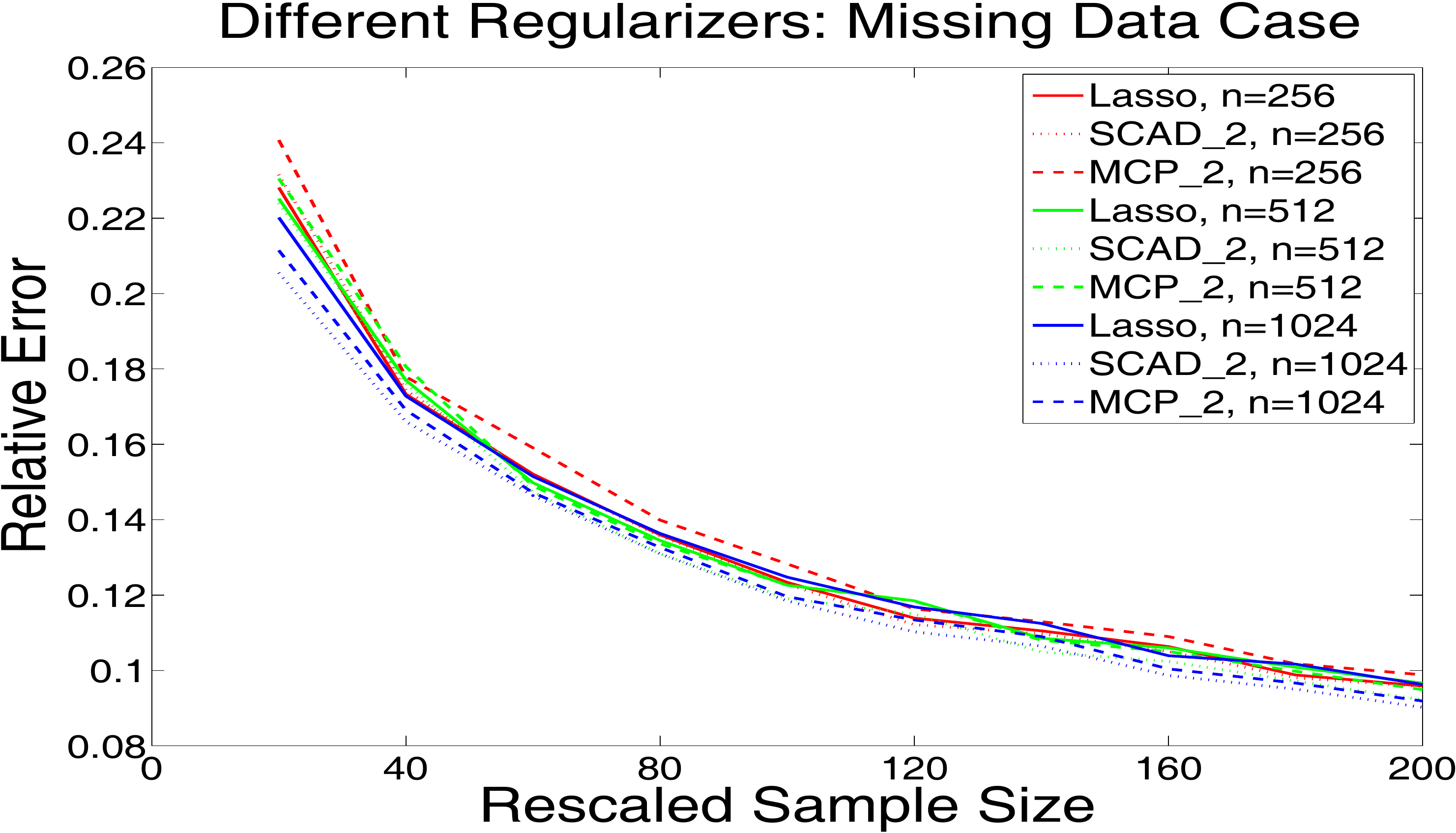}}
\caption{Statistical consistency for corrupted linear regression with Lasso, SCAD and MCP as the regularizers.}
\label{f-stat}
\end{figure}

The second experiment is designed to compare the performance of the estimators produced by two different decompositions for the SCAD and MCP regularizer, namely the estimators obtained by iterations \eqref{simu-pga-1} and \eqref{simu-pga-2}, respectively. We have investigated the performance for a broad range of dimensions $n$ and $m$, and the results are comparatively consistent across these choices. Hence we here report results for $n=1024$ and a range of the sample sizes $m=\lceil\alpha\log n\rceil$ specified by $\alpha\in\{10,30,80\}$.

In the additive noise case, we can see from Fig. \ref{f-add-scad} that for the SCAD regularizer, there seems no difference in the accuracy between the two decompositions across the range of sample sizes.
However, for the MCP regularizer, it is shown in Fig. \ref{f-add-mcp} that estimators obtained by iteration \eqref{simu-pga-2} achieve a more accurate level and a faster convergence rate than those produced by iteration \eqref{simu-pga-1} whenever the sample size is small or large.
\begin{figure}[htbp]
\centering
\subfigure[]{
\label{f-add-scad}
\includegraphics[width=0.48\textwidth]{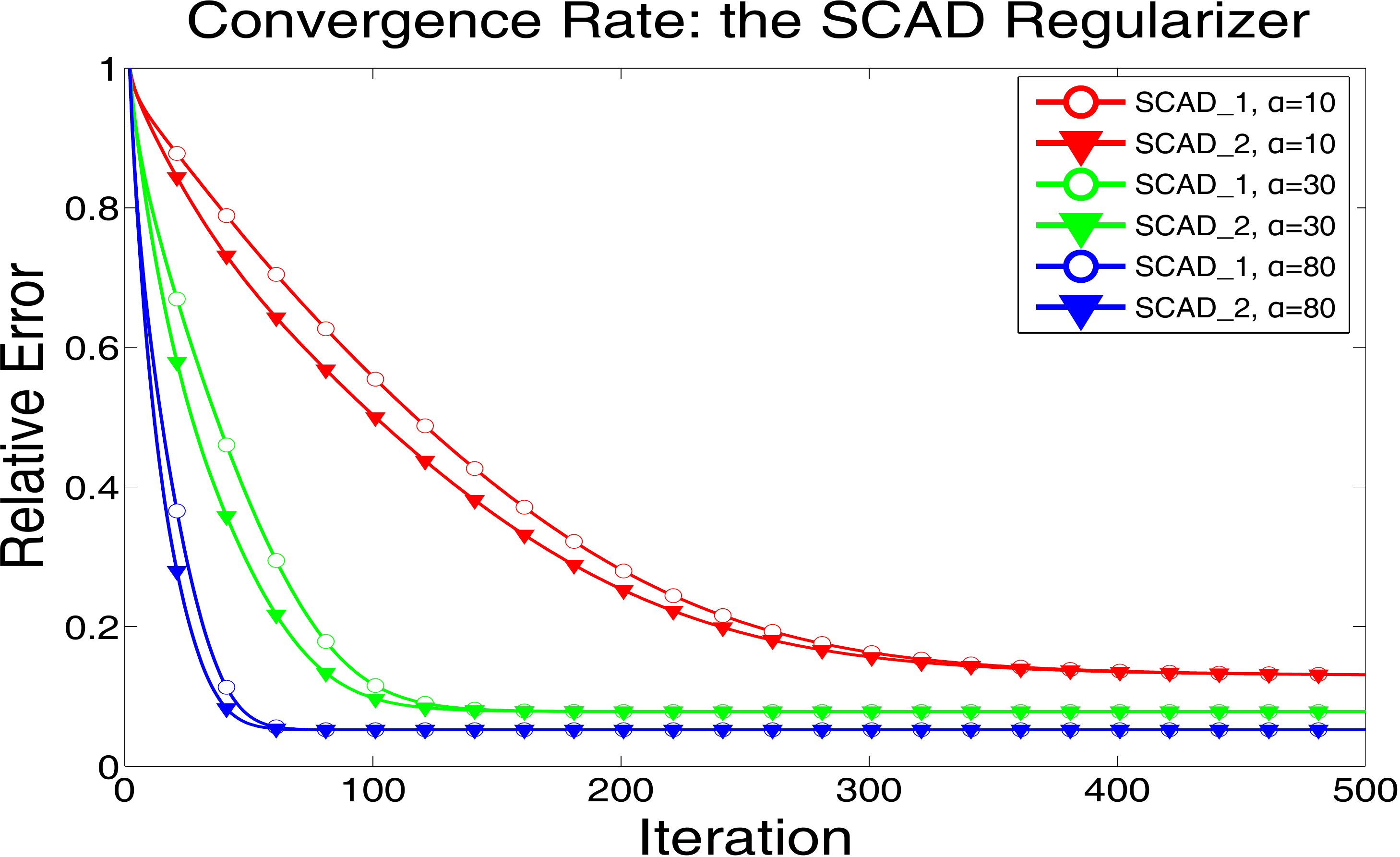}}
\subfigure[]{
\label{f-add-mcp}
\includegraphics[width=0.48\textwidth]{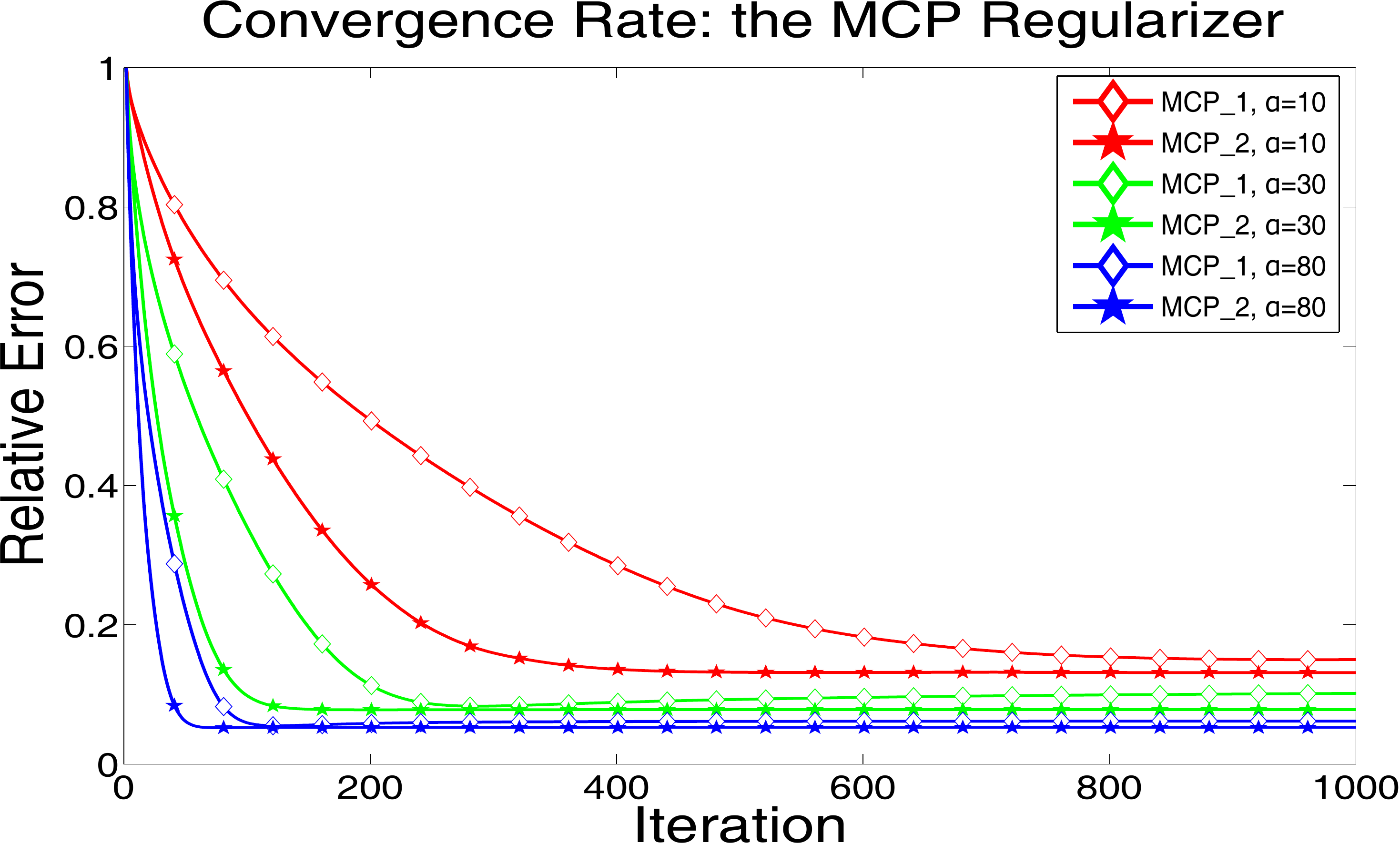}}
\caption{Comparison of decompositions for the SCAD and MCP regularizers in the additive noise case.}
\label{f-com-add}
\end{figure}

Fig. \ref{f-com-mis} depicts analogous results to Fig. \ref{f-com-add} in the case of missing data. For the SCAD regularizer, we can see from Fig. \ref{f-mis-scad} that when the sample size is small (e.g., $\alpha=10$), the difference in accuracy between these two decompositions is slight. Then as the sample size becomes larger (e.g., $\alpha=30,80$), estimators obtained by iteration \eqref{simu-pga-2} achieve a more accurate level and a faster convergence rate than those produced by iteration \eqref{simu-pga-1}. For the MCP regularizer, it is illustrated in Fig. \ref{f-mis-mcp} that estimators obtained by iteration \eqref{simu-pga-2} outperform those obtained by iteration \eqref{simu-pga-1} on both accuracy and convergence rate whenever the sample size is small or large.
\begin{figure}[htbp]
\centering
\subfigure[]{
\label{f-mis-scad}
\includegraphics[width=0.48\textwidth]{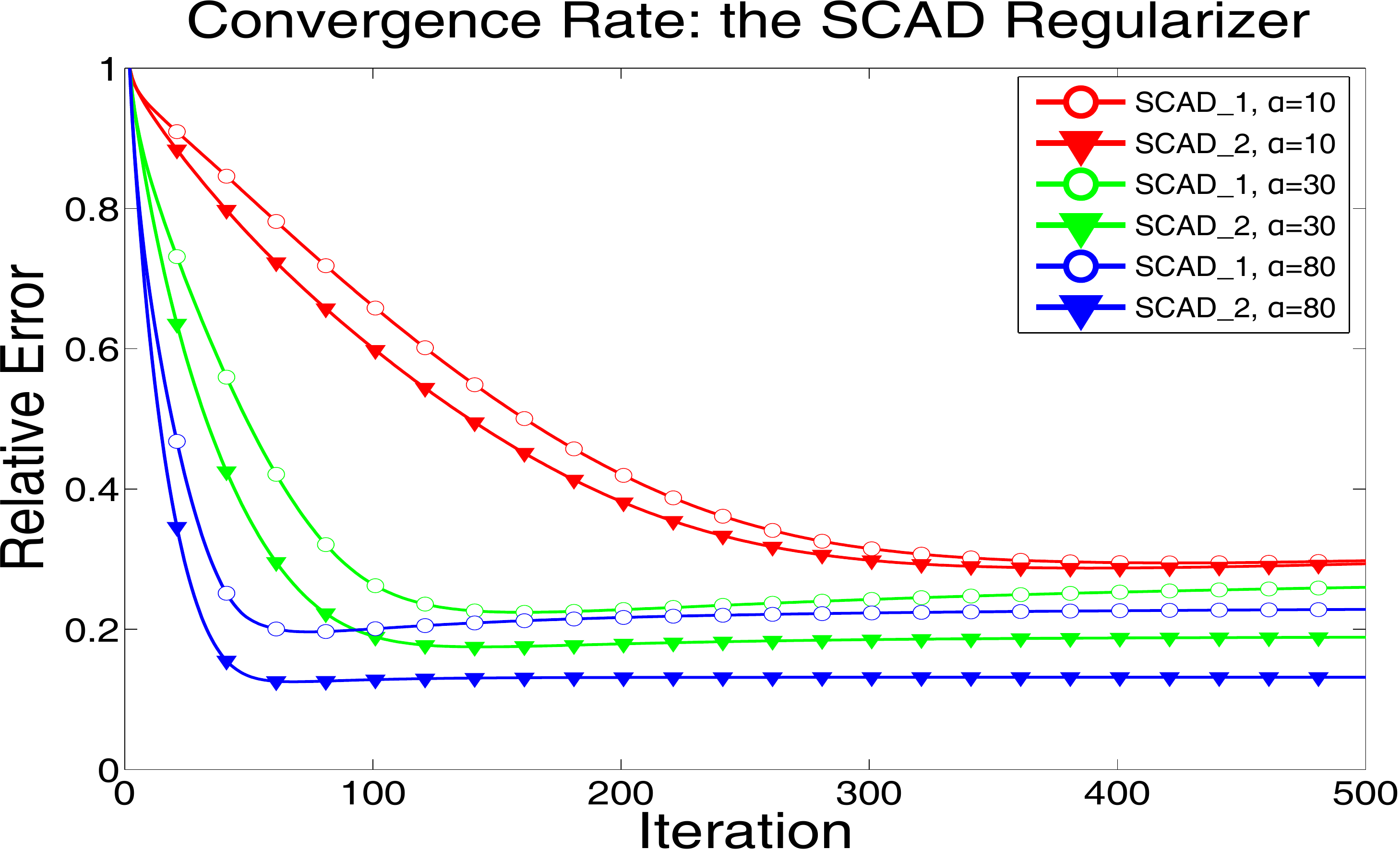}}
\subfigure[]{
\label{f-mis-mcp}
\includegraphics[width=0.48\textwidth]{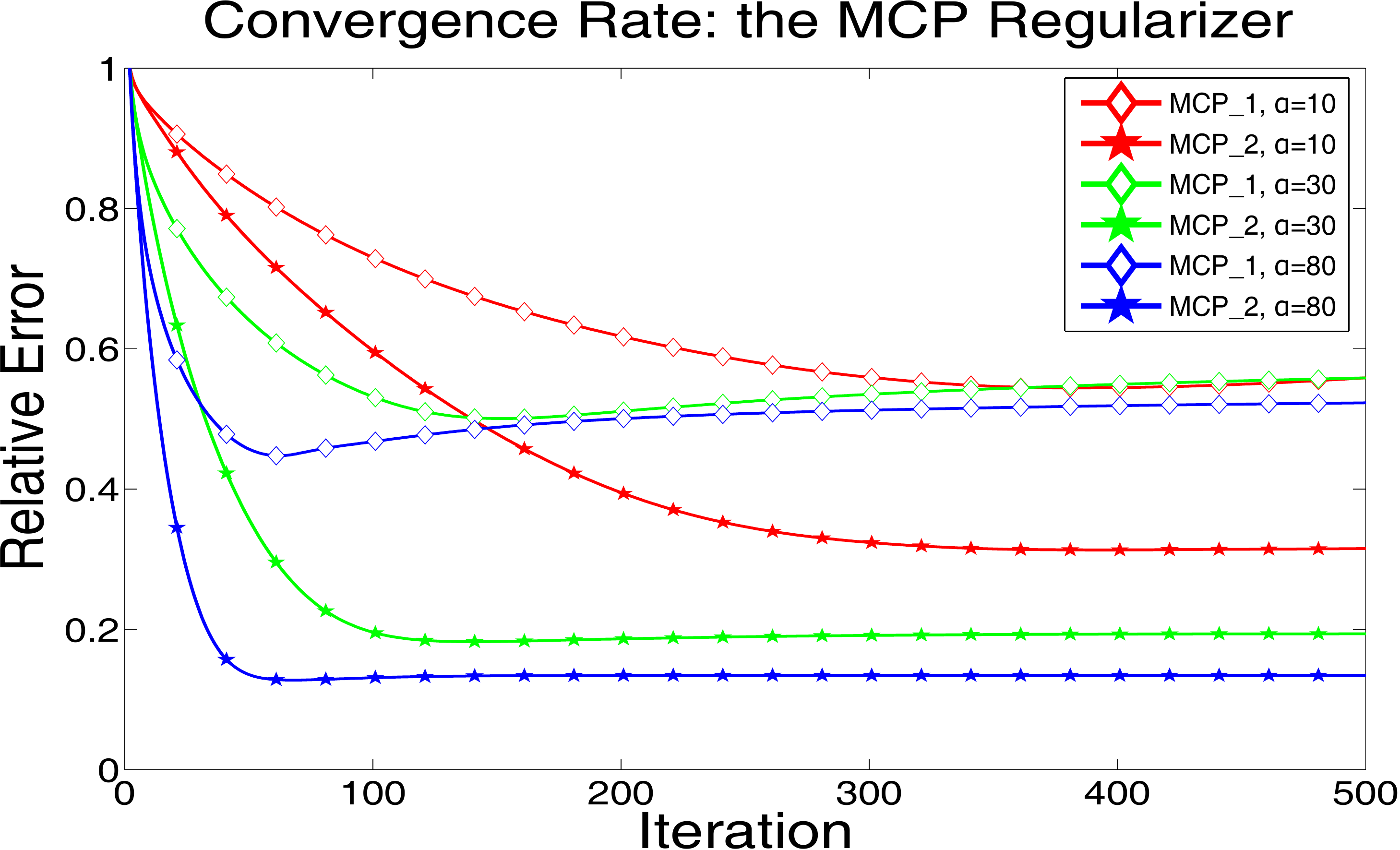}}
\caption{Comparison of decompositions for the SCAD and MCP regularizers in the missing data case.}
\label{f-com-mis}
\end{figure}

In a word, estimators produced by iteration \eqref{simu-pga-2} perform better than those obtained by iteration \eqref{simu-pga-1} on both accuracy and convergence speed, especially in the missing data case. This advantage is due to the more general condition (cf. Assumption \ref{asup-regu}(vi)), which provides the potential to consider different decompositions for specific regularizers and then to design a more efficient algorithm.

\section{Conclusion}\label{sec-concl}

In this work, we investigated the theoretical properties of local solutions of nonconvex regularized $M$-estimators, where the underlying true parameter is assumed to be of soft sparsity. We provided guarantees on statistical consistency for all stationary points of the nonconvex regularized $M$-estimators. Then we applied the proximal gradient algorithm to solve a modified version of the nonconvex optimization problem and established the linear convergence rate. Particularly, for SCAD and MCP, our assumption on the regularizer provides the possibility to consider different decompositions so as to construct estimators with better performance. Finally, the theoretical consequences and the advantage of the assumption on the regularizer were demonstrated by several simulations. However, there exist some other regularizers that do not satisfy our assumptions, such as the bridge regularizers widely used in compressed sensing and machine learning. We are still working to deal with this problem.

\section*{Appendix}\label{sec-appe}

\subsection*{Proof of Lemma \ref{lem-radius}}

The conclusion will be proved by induction on the iteration count $t$. Note that in the base case when $t=0$, the conclusion holds by assumption.
Now in the induction step, let $k\geq 0$ be given and suppose that \eqref{lem-radius-1} holds for $t=k$. Then it remains to show that \eqref{lem-radius-1} holds for $t=k+1$.
Suppose on the contrary that $\|\beta^{k+1}-\hat{\beta}\|_2>3$. By the RSC condition \eqref{alg-RSC2} and \eqref{taylor-modified}, one has that
\begin{equation*}
\bar{\T}(\beta^{k+1},\hat{\beta})\geq \gamma_4\|\beta^{k+1}-\hat{\beta}\|_2-\tau_4\sqrt{\frac{\log n}{m}}\|\beta^{k+1}-\hat{\beta}\|_1+\Q_\lambda(\beta^{k+1})-\Q_\lambda(\hat{\beta})-\langle \nabla\Q_\lambda(\hat{\beta}),\beta^{k+1}-\hat{\beta} \rangle.
\end{equation*}
It then follows from \eqref{lem-qlambda-13} in Lemma \ref{lem-qlambda} that
\begin{equation*}
\begin{aligned}
\bar{\T}(\beta^{k+1},\hat{\beta})&\geq \gamma_4\|\beta^{k+1}-\hat{\beta}\|_2-\tau_4\sqrt{\frac{\log n}{m}}\|\beta^{k+1}-\hat{\beta}\|_1-\frac{\mu_1}{2}\|\beta^{k+1}-\hat{\beta}\|_2^2.
\end{aligned}
\end{equation*}
Moreover, since the function $\h_\lambda$ is convex, one has that
\begin{equation*}
\h_\lambda(\beta^{k+1})-\h_\lambda(\hat{\beta})\geq \langle \nabla\h_\lambda(\hat{\beta}),\beta^{k+1}-\hat{\beta} \rangle.
\end{equation*}
This, together with the former inequality, implies that
\begin{equation*}
\phi(\beta^{k+1})-\phi({\beta})-\langle \nabla\phi(\hat{\beta}),\beta^{k+1}-\hat{\beta} \rangle\geq \gamma_4\|\beta^{k+1}-\hat{\beta}\|_2-\tau_4\sqrt{\frac{\log n}{m}}\|\beta^{k+1}-\hat{\beta}\|_1-\frac{\mu_1}{2}\|\beta^{k+1}-\hat{\beta}\|_2^2.
\end{equation*}
Since $\hat{\beta}$ is the optimal solution, one has by the first-order optimality condition $\langle \nabla\phi(\hat{\beta}),\beta^{k+1}-\hat{\beta} \rangle\geq 0$ that
\begin{equation}\label{lem-radius-3}
\begin{aligned}
\phi(\beta^{k+1})-\phi(\hat{\beta})&\geq \gamma_4\|\beta^{k+1}-\hat{\beta}\|_2-\tau_4\sqrt{\frac{\log n}{m}}\|\beta^{k+1}-\hat{\beta}\|_1-\frac{\mu_1}{2}\|\beta^{k+1}-\hat{\beta}\|_2^2.
\end{aligned}
\end{equation}
On the other hand, since $\|\beta^k-\hat{\beta}\|_2\leq 3$ by the induction hypothesis, applying the RSC condition \eqref{alg-RSC1} on the pair $(\hat{\beta},\beta^k)$, we have by \eqref{lem-qlambda-13} that
\begin{equation*}
\begin{aligned}
\bar{\loss}_m(\hat{\beta})&\geq \bar{\loss}_m(\beta^k)+\langle \nabla\bar{\loss}_m(\beta^k),\hat{\beta}-\beta^k \rangle+\left(\gamma_3-\frac{\mu_1}{2}\right)\|\hat{\beta}-\beta^k\|_2^2-\tau_3\frac{\log n}{m}\|\hat{\beta}-\beta^k\|_1^2.
\end{aligned}
\end{equation*}
This, together with $\h_\lambda(\hat{\beta})\geq \h_\lambda(\beta^{k+1})+\langle \nabla\h_\lambda(\beta^{k+1}),\hat{\beta}-\beta^{k+1} \rangle$
and the assmption that $\gamma>\frac12\mu_1$, implies that
\begin{equation}\label{lem-radius-4}
\begin{aligned}
\phi(\hat{\beta})
&\geq \bar{\loss}_m(\beta^k)+\langle \nabla\bar{\loss}_m(\beta^k),\hat{\beta}-\beta^k \rangle+\h_\lambda(\beta^{k+1})+\langle \nabla\h_\lambda(\beta^{k+1}),\hat{\beta}-\beta^{k+1} \rangle+ \left(\gamma_3-\frac{\mu_1}{2}\right)\|\hat{\beta}-\beta^k\|_2^2-\tau_3\frac{\log n}{m}\|\hat{\beta}-\beta^k\|_1^2\\
&\geq \bar{\loss}_m(\beta^k)+\langle \nabla\bar{\loss}_m(\beta^k),\hat{\beta}-\beta^k \rangle+
\h_\lambda(\beta^{k+1})+\langle \nabla\h_\lambda(\beta^{k+1}),\hat{\beta}-\beta^{k+1} \rangle-\tau_3\frac{\log n}{m}\|\hat{\beta}-\beta^k\|_1^2.
\end{aligned}
\end{equation}
Applying the RSM condition \eqref{alg-RSM} on the pair $(\beta^{k+1},\beta^k)$, one has by \eqref{lem-qlambda-14} and the assumption $v\geq 2\gamma_5-\mu_2$ that
\begin{equation}\label{lem-radius-5}
\begin{aligned}
\phi(\beta^{k+1})
&\leq \bar{\loss}_m(\beta^k)+\langle \nabla\bar{\loss}_m(\beta^k),\beta^{k+1}-\beta^k \rangle+\h_\lambda(\beta^{k+1})+ \left(\gamma_5-\frac{\mu_2}{2}\right)\|\beta^{k+1}-\beta^k\|_2^2+\tau_5\frac{\log n}{m}\|\beta^{k+1}-\beta^k\|_1^2\\
&\leq \bar{\loss}_m(\beta^k)+\langle \nabla\bar{\loss}_m(\beta^k),\beta^{k+1}-\beta^k \rangle+\h_\lambda(\beta^{k+1})+ \frac{v}{2}\|\beta^{k+1}-\beta^k\|_2^2+4\frac{r^2\tau_5}{L^2}\frac{\log n}{m}.
\end{aligned}
\end{equation}
Moreover, it is easy to verify that update \eqref{algo-pga} can be written equivalently as
\begin{equation}\label{lem-radius-8}
\begin{aligned}
\beta^{k+1}\in \argmin_{\beta\in \R^n, g(\beta)\leq r}\left\{\bar{\loss}_m(\beta^k)+\langle \nabla{\bar{\loss}_m(\beta^k)},\beta-\beta^k \rangle+\frac{v}{2}\|\beta-\beta^k\|_2^2+\h_\lambda(\beta)\right\}.
\end{aligned}
\end{equation}
Since $\beta^{k+1}$ is the optimal solution of \eqref{lem-radius-8}, it follows that
\begin{equation}\label{lem-radius-6}
\langle \nabla{\bar{\loss}_m(\beta^k)}+v(\beta^{k+1}-\beta^k)+\nabla\h_\lambda(\beta^{k+1}),\beta^{k+1}-\hat{\beta} \rangle\leq 0.
\end{equation}
Combining \eqref{lem-radius-4}, \eqref{lem-radius-5} and \eqref{lem-radius-6}, one has that
\begin{equation*}
\begin{aligned}
\phi(\beta^{k+1})-\phi(\hat{\beta})
&\leq \frac{v}{2}\|\beta^{k+1}-\beta^k\|_2^2+v\langle \beta^k-\beta^{k+1},\beta^{k+1}-\hat{\beta} \rangle +\tau_3\frac{\log n}{m}\|\hat{\beta}-\beta^k\|_1^2+4\frac{r^2\tau_5}{L^2}\frac{\log n}{m}\\
&\leq \frac{v}{2}\|\beta^k-\hat{\beta}\|_2^2-\frac{v}{2}\|\beta^{k+1}-\hat{\beta}\|_2^2+\tau\frac{\log n}{m}\|\hat{\beta}-\beta^k\|_1^2+4\frac{r^2\tau}{L^2}\frac{\log n}{m}\\
&\leq \frac{v}{2}\|\beta^k-\hat{\beta}\|_2^2-\frac{v}{2}\|\beta^{k+1}-\hat{\beta}\|_2^2+8\frac{r^2\tau}{L^2}\frac{\log n}{m}.
\end{aligned}
\end{equation*}
This, together with \eqref{lem-radius-3} and the assumption $v\geq \mu_1$, implies that
\begin{equation*}
\begin{aligned}
\gamma_4\|\beta^{k+1}-\hat{\beta}\|_2-\tau_4\sqrt{\frac{\log n}{m}}\|\beta^{k+1}-\hat{\beta}\|_1
&\leq
\frac{v}{2}\|\beta^k-\hat{\beta}\|_2^2-\frac{v-\mu_1}{2}\|\beta^{k+1}-\hat{\beta}\|_2^2+8\frac{r^2\tau}{L^2}\frac{\log n}{m}\\
&\leq \frac{9v}{2}-\frac{3(v-\mu_1)}{2}\|\beta^{k+1}-\hat{\beta}\|_2+8\frac{r^2\tau}{L^2}\frac{\log n}{m},
\end{aligned}
\end{equation*}
where the last inequality follows from $\|\beta^k-\hat{\beta}\|_2\leq 3$ by the induction hypothesis and $\|\beta^{k+1}-\hat{\beta}\|_2>3$ by assumption.
Since $\|\beta^{k+1}-\hat{\beta}\|_1\leq \|\beta^{k+1}\|_1+\|\hat{\beta}\|_1\leq g(\beta^{k+1})/L+g(\hat{\beta})/L\leq 2r/L$, it follows that
\begin{equation}\label{lem-radius-contra}
\begin{aligned}
\left(\gamma+\frac{3(v-\mu_1)}{2}\right)\|\beta^{k+1}-\hat{\beta}\|_2
&\leq \frac{9v}{2}+\tau\sqrt{\frac{\log n}{m}}\|\beta^{k+1}-\hat{\beta}\|_1+8\frac{r^2\tau}{L^2}\frac{\log n}{m}\\
&\leq \frac{9v}{2}+2\frac{r\tau}{L}\sqrt{\frac{\log n}{m}}+8\frac{r^2\tau}{L}\frac{\log n}{m}.
\end{aligned}
\end{equation}
By assumptions \eqref{thm2-lambda} and \eqref{thm2-m},
we obtain that $2\frac{r\tau}{L}\sqrt{\frac{\log n}{m}}\leq \frac32(\gamma-\frac{3\mu_1}{2})$ and that $8\frac{r^2\tau}{L^2}\frac{\log n}{m}\leq \frac32(\gamma-\frac{3\mu_1}{2})$. Combining these two inequalities with \eqref{lem-radius-contra}, one has that
\begin{equation*}
\left(\gamma+\frac{3(v-\mu_1)}{2}\right)\|\beta^{k+1}-\hat{\beta}\|_2
\leq 3\left(\gamma+\frac{3(v-\mu_1)}{2}\right),
\end{equation*}
Hence, $\|\beta^{k+1}-\hat{\beta}\|_2\leq 3$, a contradiction. Thus, \eqref{lem-radius-1} holds for $t=k+1$.
By the principle of induction, \eqref{lem-radius-1} holds for all $t\geq 0$. The proof is complete.

\subsection*{Proof of Lemma \ref{lem-cone}}

We first show that if $\lambda\geq \frac{4}{L}\|\nabla\loss_m(\beta^*)\|_\infty$, then for any $\beta\in \Omega$ satisfying
\begin{equation}\label{lem-cone-De2}
\phi(\beta)-\phi(\beta^*)\leq \Delta,
\end{equation}
it holds that
\begin{equation}\label{lem-cone-1}
\begin{aligned}
\|\beta-\beta^*\|_1&\leq 4R_q^{\frac12}\left(\frac{\log n}{m}\right)^{-\frac{q}{4}}\|\beta-\beta^*\|_2+4R_q\left(\frac{\log n}{m}\right)^{\frac12-\frac{q}2}+\frac{2}{L}\min\left(\frac{\Delta}{\lambda},r\right).
\end{aligned}
\end{equation}
Set $\delta:=\beta-\beta^*$. From \eqref{lem-cone-De2}, we obtain that
\begin{equation*}
\loss_m(\beta^*+\delta)+\regu_\lambda(\beta^*+\delta)\leq \loss_m(\beta^*)+\regu_\lambda(\beta^*)+\Delta.
\end{equation*}
Then subtracting $\langle \nabla\loss_m(\beta^*),\delta \rangle$ from both sides of the former inequality, one has that
\begin{equation}\label{lem-cone-2}
\T(\beta^*+\delta)+\regu_\lambda(\beta^*+\delta)-\regu_\lambda(\beta^*)\leq -\langle \nabla\loss_m(\beta^*),\delta \rangle+\Delta.
\end{equation}
We now claim that
\begin{equation}\label{lem-cone-claim}
\regu_\lambda(\beta^*+\delta)-\regu_\lambda(\beta^*)\leq \frac{\lambda L}{2}\|\delta\|_1+\Delta.
\end{equation}
The following argument is divided into two cases. First assume $\|\delta\|_2\leq 3$.
Then it follows from the RSC condition \eqref{alg-RSC1} and \eqref{lem-cone-2} that
\begin{equation*}
\gamma_3\|\delta\|_2^2-\tau_3\frac{\log n}{m}\|\delta\|_1^2+\regu_\lambda(\beta^*+\delta)-\regu_\lambda(\beta^*)
\leq \|\nabla\loss_m(\beta^*)\|_\infty\|\delta\|_1+\Delta\leq \frac{\lambda L}{4}\|\delta\|_1+\Delta.
\end{equation*}
By assumptions \eqref{thm2-lambda} and \eqref{thm2-m}, we obtain that $\lambda L\geq 8\frac{r\tau}{L}\frac{\log n}{m}$. This, together with the facts that $\gamma_3>0$ and that $\|\delta\|_1\leq \|\beta^*\|_1+\|\beta\|_1\leq g(\beta^*)/L+g(\beta)/L\leq 2r/L$, implies \eqref{lem-cone-claim}.
In the case when $\|\delta\|_2>3$, the RSC condition \eqref{alg-RSC2} yields that
\begin{equation*}
\gamma_4\|\delta\|_2-\tau_4\sqrt{\frac{\log n}{m}}\|\delta\|_1+\regu_\lambda(\beta^*+\delta)-\regu_\lambda(\beta^*)
\leq \|\nabla\loss_m(\beta^*)\|_\infty\|\delta\|_1+\Delta
\leq \frac{\lambda L}{4}\|\delta\|_1+\Delta,
\end{equation*}
thus by assumption \eqref{thm2-lambda},
we also arrive at \eqref{lem-cone-claim}.
Let $J$ denote the index set corresponding to the $|S_\eta|$ largest coordinates in absolute value of $\tilde{\delta}$ (recalling the set $S_\eta$ defined in \eqref{S-eta}. It then follows from Lemma \ref{lem-regu} (with $S=S_\eta$) that
\begin{equation}\label{lem-cone-8}
\regu_\lambda(\beta^*)-\regu_\lambda(\beta^*+\delta)\leq
\lambda L(\|\delta_J\|_1-\|\delta_{J^c}\|_1)+2\lambda L\|\beta^*_{S_\eta^c}\|_1.
\end{equation}
Summing up \eqref{lem-cone-8} and \eqref{lem-cone-claim}, one has that $0\leq \frac{3\lambda L}{2}\|\delta_J\|_1-\frac{\lambda L}{2}\|\delta_{J^c}\|_1+2\lambda L\|\beta^*_{S_\eta^c}\|_1+\Delta$,
and consequently, $\|\delta_{J^c}\|_1\leq 3\|\delta_J\|_1+4\|\beta^*_{S_\eta^c}\|_1+\frac{2\Delta}{\lambda L}$. By the definition of the index set $J$ and using the trivial bound $\|\delta\|_1\leq
2r/L$, one has that
\begin{equation}\label{lem-cone-9}
\|\delta\|_1\leq 4\sqrt{|S_\eta}|\|\delta\|_2+4\|\beta^*_{S_\eta^c}\|_1+\frac{2}{L}\min\left(\frac{\Delta}{\lambda},r\right).
\end{equation}
Combining \eqref{s-eta} with \eqref{lem-cone-9} and setting $\eta=\sqrt{\frac{\log n}{m}}$, we arrive at \eqref{lem-cone-1}.
We now verify that \eqref{lem-cone-De2} is held by the vector $\hat{\beta}$ and $\beta^t$, respectively. Since $\hat{\beta}$ is the optimal solution, it holds that $\phi(\hat{\beta})\leq \phi(\beta^*)$, and by assumption \eqref{lem-cone-De1}, it holds that $\phi(\beta^t)\leq \phi(\hat{\beta})+\Delta\leq \phi(\beta^*)+\Delta$. Consequently, it follows from \eqref{lem-cone-1} that
\begin{equation*}
\begin{aligned}
\|\hat{\beta}-\beta^*\|_1
&\leq 4R_q^{\frac12}\left(\frac{\log n}{m}\right)^{-\frac{q}{4}}\|\hat{\beta}-\beta^*\|_2+4R_q\left(\frac{\log n}{m}\right)^{\frac12-\frac{q}2},\quad \mbox{and}\\
\|\beta^t-\beta^*\|_1
&\leq 4R_q^{\frac12}\left(\frac{\log n}{m}\right)^{-\frac{q}{4}}\|\beta^t-\beta^*\|_2+4R_q\left(\frac{\log n}{m}\right)^{\frac12-\frac{q}2} +\frac{2}{L}\min\left(\frac{\Delta}{\lambda},r\right).\\
\end{aligned}
\end{equation*}
By the triangle inequality, we then conclude that
\begin{equation*}
\begin{aligned}
\|\beta^t-\hat{\beta}\|_1
&\leq \|\hat{\beta}-\beta^*\|_1+\|\beta^t-\beta^*\|_1
\leq 4R_q^{\frac12}\left(\frac{\log n}{m}\right)^{-\frac{q}{4}}(\|\hat{\beta}-\beta^*\|_2+\|\beta^t-\beta^*\|_2)+8R_q\left(\frac{\log n}{m}\right)^{\frac12-\frac{q}2}+\frac{2}{L}\min\left(\frac{\Delta}{\lambda},r\right)\\
&\leq 4R_q^{\frac12}\left(\frac{\log n}{m}\right)^{-\frac{q}{4}}\|\beta^t-\hat{\beta}\|_2+\bar{\epsilon}_{\text{stat}}+\epsilon(\Delta).
\end{aligned}
\end{equation*}
The proof is complete.

\subsection*{Proof of Lemma \ref{lem-Tphi-bound}}

By the RSC condition \eqref{alg-RSC1}, Lemma 2 and Lemma \ref{lem-qlambda} (cf. \eqref{lem-qlambda-13}) , one has that
\begin{equation}\label{lem-bound-1}
\bar{\T}(\beta^t,\hat{\beta})\geq (\gamma_3-\frac{\mu_1}{2})\|\beta^t-\hat{\beta}\|_2^2-\tau_3\frac{\log n}{m}\|\beta^t-\hat{\beta}\|_1^2.
\end{equation}
It then follows from Lemma \ref{lem-cone} and assumption \eqref{thm2-m} that
\begin{equation*}
\bar{\T}(\hat{\beta},\beta^t)\geq (\gamma_3-\frac{\mu_1}{2})\|\beta^t-\hat{\beta}\|_2^2-\tau_3\frac{\log n}{m}\|\beta^t-\hat{\beta}\|_1^2\geq -2\tau\frac{\log n}{m}(\bar{\epsilon}_{\text{stat}}+\epsilon(\Delta))^2,
\end{equation*}
which establishes \eqref{lem-bound-T1}.
Furthermore, it follows from the convexity of $\h_\lambda$ that
\begin{equation}\label{lem-bound-2}
\h_\lambda(\beta^t)-\h_\lambda(\hat{\beta})-\langle \nabla\h_\lambda(\hat{\beta}),\beta^t-\hat{\beta} \rangle\geq 0,
\end{equation}
and the first-order optimality condition for $\hat{\beta}$ that
\begin{equation}\label{lem-bound-3}
\langle \nabla\phi(\hat{\beta}),\beta^t-\hat{\beta} \rangle\geq 0.
\end{equation}
Combining \eqref{lem-bound-1}, \eqref{lem-bound-2} and \eqref{lem-bound-3}, one has that
\begin{equation*}
\phi(\beta^t)-\phi(\hat{\beta})\geq (\gamma_3-\frac{\mu_1}{2})\|\beta^t-\hat{\beta}\|_2^2-\tau_3\frac{\log n}{m}\|\beta^t-\hat{\beta}\|_1^2.
\end{equation*}
Then using Lemma \ref{lem-cone} to bound the term $\|\beta^t-\hat{\beta}\|_2^2$ and noting assumption \eqref{thm2-m},
we arrive at \eqref{lem-bound-T2}.
Now we turn to prove \eqref{lem-bound-0}. Define
\begin{equation*}
\phi_t(\beta):=\bar{\loss}_m(\beta^t)+\langle \nabla{\bar{\loss}_m(\beta^t)},\beta-\beta^t \rangle+\frac{v}{2}\|\beta-\beta^t\|_2^2+\h_\lambda(\beta),
\end{equation*}
which is the optimization objective function minimized over the feasible region $\Omega=\{\beta:g(\beta)\leq r\}$ at iteration count $t$. For any $a\in [0,1]$, it is easy to see that the vector $\beta_a=a\hat{\beta}+(1-a)\beta^t$ belongs to $\Omega$ by the convexity of $\Omega$. Since $\beta^{t+1}$ is the optimal solution of the optimization problem \eqref{algo-pga}, we have that
\begin{equation*}
\begin{aligned}
\phi_t(\beta^{t+1}) &\leq \phi_t(\beta_a)=\bar{\loss}_m(\beta^t)+\langle \nabla{\bar{\loss}_m(\beta^t)},\beta_a-\beta^t \rangle +\frac{v}{2}\|\beta_a-\beta^t\|_2^2+\h_\lambda(\beta_a)\\
&\leq \bar{\loss}_m(\beta^t)+\langle \nabla{\bar{\loss}_m(\beta^t)},a\hat{\beta}-a\beta^t \rangle +\frac{va^2}{2}\|\hat{\beta}-\beta^t\|_2^2+a\h_\lambda(\hat{\beta})+(1-a)\h_\lambda(\beta^t),
\end{aligned}
\end{equation*}
where the last inequality is from the convexity of $\h_\lambda$.
Then by \eqref{lem-bound-T1}, one has that
\begin{equation}\label{lem-bound-4}
\begin{aligned}
\phi_t(\beta^{t+1})
&\leq (1-a)\bar{\loss}_m(\beta^t)+a\bar{\loss}_m(\hat{\beta})+2a\tau\frac{\log n}{m}(\epsilon(\Delta)+\bar{\epsilon}_{\text{stat}})^2+\frac{va^2}{2}\|\hat{\beta}-\beta^t\|_2^2+a\h_\lambda(\hat{\beta})+(1-a)\h_\lambda(\beta^t)\\
&\leq \phi(\beta^t)-a(\phi(\beta^t)-\phi(\hat{\beta}))+2\tau\frac{\log n}{m}(\epsilon(\Delta)+\bar{\epsilon}_{\text{stat}})^2+\frac{va^2}{2}\|\hat{\beta}-\beta^t\|_2^2.
\end{aligned}
\end{equation}
Applying the RSM condition \eqref{alg-RSM} on the pair $(\beta^{t+1},\beta^t)$, one has by \eqref{lem-qlambda-11} and the assumption $v\geq 2\gamma_5-\mu_2$ that
\begin{equation*}
\bar{\T}(\beta^{t+1},\beta^t)\leq \left(\gamma_5-\frac{\mu_2}{2}\right)\|\beta^{t+1}-\beta^t\|_2^2+\tau_5\frac{\log n}{m}\|\beta^{t+1}-\beta^t\|_1^2\leq \frac{v}{2}\|\beta^{t+1}-\beta^t\|_2^2+\tau\frac{\log n}{m}\|\beta^{t+1}-\beta^t\|_1^2.
\end{equation*}
Adding $\h_\lambda(\beta^{t+1})$ to both sides of the former inequality yields that
\begin{equation*}
\begin{aligned}
\phi(\beta^{t+1})
&\leq \bar{\loss}_m(\beta^t)+\langle \nabla{\bar{\loss}_m(\beta^t)},\beta^{t+1}-\beta^t \rangle+\h_\lambda(\beta^{t+1})+\frac{v}{2}\|\beta^{t+1}-\beta^t\|_2^2+\tau\frac{\log n}{m}\|\beta^{t+1}-\beta^t\|_1^2\\
&=\phi_t(\beta^{t+1})+\tau\frac{\log n}{m}\|\beta^{t+1}-\beta^t\|_1^2.
\end{aligned}
\end{equation*}
This, together with \eqref{lem-bound-4}, implies that
\begin{equation}\label{lem-bound-5}
\phi(\beta^{t+1})\leq \phi(\beta^t)-a(\phi(\beta^t)-\phi(\hat{\beta}))+\frac{va^2}{2}\|\hat{\beta}-\beta^t\|_2^2+\tau\frac{\log n}{m}\|\beta^{t+1}-\beta^t\|_1^2+2\tau\frac{\log n}{m}(\epsilon(\Delta)+\bar{\epsilon}_{\text{stat}})^2.
\end{equation}
Define $\delta^t:=\beta^t-\hat{\beta}$. Then it follows that
$\|\beta^{t+1}-\beta^t\|_1^2\leq (\|\delta^{t+1}\|_1+\|\delta^t\|_1)^2\leq 2\|\delta^{t+1}\|_1^2+2\|\delta^t\|_1^2$. Combining this inequality with \eqref{lem-bound-5}, one has that
\begin{equation*}
\phi(\beta^{t+1})\leq \phi(\beta^t)-a(\phi(\beta^t)-\phi(\hat{\beta}))+\frac{va^2}{2}\|\hat{\beta}-\beta^t\|_2^2+2\tau\frac{\log n}{m}(\|\delta^{t+1}\|_1^2+\|\delta^t\|_1^2)+2\tau\frac{\log n}{m}(\bar{\epsilon}_{\text{stat}}+\epsilon(\Delta))^2.
\end{equation*}
To simlify the notations, we define $\psi:=\tau\frac{\log n}{m}(\bar{\epsilon}_{\text{stat}}+\epsilon(\Delta))^2$, $\zeta:=\tau R_q\left(\frac{\log n}{m}\right)^{1-\frac{q}2}$ and $\Delta_t:=\phi(\beta^t)-\phi(\hat{\beta})$. Applying Lemma \ref{lem-cone} to bound the term $\|\delta^{t+1}\|_1^2$ and $\|\delta^t\|_1^2$, we obtain that
\begin{equation}\label{lem-bound-6}
\begin{aligned}
\phi(\beta^{t+1})
&\leq \phi(\beta^t)-a(\phi(\beta^t)-\phi(\hat{\beta}))+\frac{va^2}{2}\|\delta^t\|_2^2+64R_q\tau\left(\frac{\log n}{m}\right)^{1-\frac{q}2}(\|\delta^{t+1}\|_2^2+\|\delta^t\|_2^2)+10\psi\\
&= \phi(\beta^t)-a(\phi(\beta^t)-\phi(\hat{\beta}))+\left(\frac{va^2}{2}+64\zeta\right)\|\delta^t\|_2^2+64\zeta\|\delta^{t+1}\|_2^2+10\psi.\\
\end{aligned}
\end{equation}
Subtracting $\phi(\hat{\beta})$ from both sides of \eqref{lem-bound-6}, we have by \eqref{lem-bound-T2} that
\begin{equation*}
\begin{aligned}
\Delta_{t+1}
&\leq (1-a)\Delta_t+\frac{va^2+128\zeta}{\gamma-\mu_1/2}(\Delta_t+2\psi) +\frac{128\zeta}{\gamma-\mu_1/2}(\Delta_{t+1}+2\psi)+10\psi.
\end{aligned}
\end{equation*}
Setting $a=\frac{2\gamma-\mu_1}{4v}\in (0,1)$, one has by the former inequality that
\begin{equation*}
\begin{aligned}
\left(1-\frac{256\zeta}{2\gamma-\mu_1}\right)\Delta_{t+1}&\leq \left(1-\frac{2\gamma-\mu_1}{8v}+\frac{256\zeta}{2\gamma-\mu_1}\right)\Delta_t +2\left(\frac{2\gamma-\mu_1}{8v}+\frac{512\zeta}{2\gamma-\mu_1}+5\right)\psi,
\end{aligned}
\end{equation*}
or equivalently, $\Delta_{t+1}\leq \kappa\Delta_t+\xi(\bar{\epsilon}_{\text{stat}}+\epsilon(\Delta))^2$, where $\kappa$ and $\xi$ were previously defined in \eqref{lem-bound-kappa} and \eqref{lem-bound-xi}, respectively. Finally, we obtain that
\begin{equation*}
\begin{aligned}
\Delta_t &\leq \kappa^{t-T}\Delta_T+\xi(\bar{\epsilon}_{\text{stat}}+\epsilon(\Delta))^2(1+\kappa+\kappa^2+\cdots+\kappa^{t_T+1})\\
&\leq \kappa^{t-T}\Delta_T+\frac{\xi}{1-\kappa}(\bar{\epsilon}_{\text{stat}}+\epsilon(\Delta))^2\leq
\kappa^{t-T}\Delta_T+\frac{2\xi}{1-\kappa}(\bar{\epsilon}_{\text{stat}}^2+\epsilon^2(\Delta)).
\end{aligned}
\end{equation*}
The proof is complete.

\section*{Acknowledgments}

\noindent Chong Li was supported in part by the National Natural Science Foundation of China [grant number 11971429] and Zhejiang Provincial Natural Science Foundation of China [grant number LY18A010004]; Jinhua Wang was supported in part by the National Natural Science Foundation of China [grant number 11771397] and  Zhejiang Provincial Natural Science Foundation of China [grant numbers LY17A010021, LY17A010006]; Jen-Chih Yao was supported in part by the Grant MOST [grant number 108-2115-M-039-005-MY3].



\bibliographystyle{elsarticle-harv}
\bibliography{./nonconvex-arxiv}


%
%
%
\end{document}